\title[Normal bundles]{Normal bundles on the exceptional sets of simple small resolutions}
\author[Rong Du]{Rong Du$^{\dag}$}
\address{School of Mathematical Sciences\\
Shanghai Key Laboratory of PMMP\\
East China Normal University\\
Rm. 312, Math. Bldg, No. 500, Dongchuan Road\\
Shanghai, 200241, P. R. China} \email{rdu@math.ecnu.edu.cn}
\author[Xinyi Fang]{Xinyi Fang}
\address{School of Mathematical Sciences\\
Shanghai Key Laboratory of PMMP\\
East China Normal University\\
No. 500, Dongchuan Road\\
Shanghai, 200241, P. R. China}
\email{2315885681@qq.com}
\thanks{$^{\dag}$ The Research Sponsored by the National Natural Science Foundation of China (Grant No. 11471116, 11531007) and Science and Technology Commission of Shanghai Municipality (Grant No. 18dz2271000).}
\theoremstyle{definition}
\newtheorem{theorem}[subsection]{Theorem}
\newtheorem{lemma}[subsection]{Lemma}
\newtheorem{definition}[subsection]{Definition}
\newtheorem{proposition}[subsection]{Proposition}
\newtheorem{example}{Example}[section]
\newtheorem{corollary}[subsection]{Corollary}
\newtheorem{remark}[subsection]{Remark}
\newfont{\drnew}{wncyr10}
\def\dashfill{\leaders\hbox{\hbox to 3.25pt{\hrulefill}\hspace*{2pt}\hbox to 3.25pt{\hrulefill}}\hfill}
\newcommand{\CITE}[1]{{[#1]}}
\let\cite=\CITE
\begin{document}

\begin{abstract}
We study the normal bundles of the exceptional sets of isolated simple small singularities in the higher dimension when the Picard group of the exceptional set is $\mathbb{Z}$ and the normal bundle of it has some good filtration. In particular, for the exceptional set is a projective space with the split normal bundle, we generalized Nakayama and Ando's results to higher dimension. Moreover, we also generalize Laufer's results of rationality and embedding dimension to higher dimension.
\end{abstract}

\maketitle

\vspace{1cm}
\section{\textbf{Introduction}}

Let $X$ be a complex manifold of dimension $n$ containing a compact smooth irreducible analytic submanifold $Z$ of dimension $p$. The submanifold $Z$ is called exceptional or contractible, if there exists a birational proper morphism $\varphi: X\rightarrow Y $ whose exceptional set is $Z$, where $Y$ may be an algebraic space or an analytic space. If the subset $Z$ of $X$ is of codimension greater than or equal to $2$,  the morphism $\varphi$ is called a small contraction, and the pair $(\varphi(X), \varphi(Z))$ is called the small singularity. Moreover, if dim $\varphi(Z)=0$, then the singularity is called isolated simple small singularity.

When $X$ is a smooth projective surface, an algebraic curves $C$ is exceptional if and only if its normal bundle $N_{C/X}$ is negative. When dim$X \geq 3$, Grauert has shown that if the normal bundle of algebraic set $Z$, $N_{Z/X}$, is negative, then $Z$ is exceptional. But the reverse is not true. For example, we know many exceptional curves whose normal bundles are not negative (cf. \cite{La1}). However, Nakayama and Ando have some very interesting works on the normal bundles of exceptional curves in the higher dimension (cf. \cite{An1},\cite{An2}, \cite{Na}), especially for the exceptional set is a rational curve.

\begin{theorem}(cf. \cite{An2})\label{P1}
Let $X$ be a nonsingular projective variety of dimension $N\ge 3$ over $\mathbb{C}$, and let $\mathbb{P}^1\in X$. Assume that surjective morphism $f: X\rightarrow Y$ is a contraction map with the $\mathbb{P}^1$ as the exceptional set. Let the normal bundle
\[N_{\mathbb{P}^1/X}=\oplus_{i=1}^{N-1}\mathscr{O}_{\mathbb{P}^1}(-a_i), ~(a_1\le\cdots\le a_{N-1}).\]
Then we have the following inequality holds.
\[2(a_1+\cdots+a_h)+(a_{h+1}+\cdots+a_{N-1})\ge 0,\]
for any $1\le h\le N-1$.
\end{theorem}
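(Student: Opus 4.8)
The plan is to localise around the point $y_0:=f(\mathbb{P}^1)$, recast the asserted inequalities as a single upper bound on the degrees of subbundles of the normal bundle, and then exploit the extreme rigidity of having all of $\mathbb{P}^1$ as the sole exceptional fibre. Write $Z:=\mathbb{P}^1$, $\mathscr N:=N_{Z/X}=\bigoplus_{i=1}^{N-1}\mathscr O_Z(-a_i)$, so that the conormal bundle is $\mathscr N^\vee=I/I^2=\bigoplus_i\mathscr O_Z(a_i)$ and $\deg\mathscr N=-\sum_i a_i$. Since the largest degree of a rank-$r$ subbundle of $\mathscr N$ is $-(a_1+\cdots+a_r)$, attained by the coordinate subbundle $\bigoplus_{i\le r}\mathscr O_Z(-a_i)$, a one-line rearrangement shows that ``$2(a_1+\cdots+a_h)+(a_{h+1}+\cdots+a_{N-1})\ge 0$ for all $1\le h\le N-1$'' is \emph{equivalent} to
\[
\deg S\ \le\ -\deg\mathscr N\qquad\text{for every subbundle }S\subseteq\mathscr N
\]
(the case $S=\mathscr N$ being $\deg\mathscr N\le 0$). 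So it suffices to prove this degree bound. First I would shrink $Y$ to a Stein neighbourhood of $y_0$ and normalise, so that $Y$ is normal of dimension $N$, $f_*\mathscr O_X=\mathscr O_Y$, $f$ is an isomorphism over $Y\smallsetminus\{y_0\}$, and $f^{-1}(y_0)=Z$. Then $Z$ is rigid in $X$: a deformation $Z_t$ has $f_*[Z_t]=f_*[Z]=0$, so $f(Z_t)$ is a point, which cannot lie in $Y\smallsetminus\{y_0\}$ (over which $f$ is an isomorphism), hence equals $y_0$, forcing $Z_t\subseteq f^{-1}(y_0)=Z$, i.e. $Z_t=Z$.

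The next step is to turn this rigidity into the degree bound. The mechanism I would try is: the maximal-degree rank-$h$ subbundle $S=\bigoplus_{i\le h}\mathscr O_Z(-a_i)\subseteq\mathscr N$ contains a sub-line-bundle $\mathscr O_Z(b)\hookrightarrow\mathscr N$ with $b$ as large as $-a_1$, which (in the only nontrivial case $a_1\le 0$) has $H^0(Z,\mathscr O_Z(b))$ of dimension $b+1$, producing a $(b+1)$-dimensional space of first-order deformations of $Z$; by rigidity these must be killed by the obstruction theory. One computes that the primary quadratic obstruction map $H^0(Z,\mathscr N)\to H^1(Z,\mathscr N)$, restricted to the sections coming from $\mathscr O_Z(b)$, factors as multiplication of $H^0(Z,\mathscr O_Z(2b))$ against a fixed ``second fundamental form'' class in $H^1(Z,\mathscr N\otimes\mathscr O_Z(-2b))$ --- the twist by $\mathscr O_Z(-2b)$ is exactly where the coefficient $2$ is born --- and the requirement that the Kuranishi space of $[Z]$ be a point forces this class to be non-trivial in each graded piece; tracing through the resulting non-vanishing of $H^1(Z,\mathscr N\otimes\mathscr O_Z(-2b))$ (and, for general $h$, running the analogous argument for the rank-$h$ subbundle, equivalently after one further blow-up of $\widetilde X=\mathrm{Bl}_Z X$ along the sub-$\mathbb P^{h-1}$-bundle $\mathbb P(S^\vee)\subseteq\mathbb P(\mathscr N^\vee)$, whose conormal bundle contains the tautological line bundle with \emph{opposite} signs in its sub and its quotient) should give $b\le-\deg\mathscr N$, i.e. the full family. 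A parallel, more commutative-algebraic route: with $E\subset\widetilde X$ the exceptional divisor of the blow-up, $\mathrm{ord}_E$ is a divisorial valuation on $\mathscr O_{Y,y_0}$ centred at $y_0$, its associated graded embeds (via formal functions, $\widehat{\mathscr O}_{Y,y_0}=\varprojlim_k H^0(Z,\mathscr O_X/I^{k+1})$) into $\bigoplus_k H^0(Z,\mathrm{Sym}^k\mathscr N^\vee)$, and comparing the degree-$N$ growth of the $\mathrm{ord}_E$-adic Hilbert function against $\sum_{i<k}\chi(Z,\mathrm{Sym}^i\mathscr N^\vee)$ (and its analogues after the $h$-blow-up) again yields these inequalities.

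The step I expect to be genuinely delicate is making this positivity \emph{quantitative}. Bare rigidity, or bare bigness of $\mathscr O_E(1)$, only yields $\deg\mathscr N\le 0$ (and $a_{N-1}\ge 0$); a crude dimension count on the obstruction map is not reliably a necessary condition, since higher obstructions may intervene. The real content --- that the obstruction/``second fundamental form'' class is nonzero in \emph{each} graded piece, equivalently that the relevant higher direct images $R^q g_*$ of twisted sheaves have the predicted rank --- is where one must invoke a positivity criterion for exceptional sets (Grauert's pseudoconvexity/ampleness criterion) to manufacture, for some $\ell\gg 0$, sufficiently many functions on $Y$ vanishing to prescribed order along the $\ell$-th infinitesimal neighbourhood $Z_\ell$; I expect to have to reproduce that analysis rather than get by with formal generalities. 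If it proves awkward, a safer organisation is induction on $N=\dim X$: a general smooth contracted hypersurface $D\supseteq Z$ with $D\cdot Z$ as small as possible has $N_{Z/D}=\bigoplus_{i\ge 2}\mathscr O_Z(-a_i)$, so the inductive hypothesis supplies every inequality with $h\ge 2$ apart from the term $2a_1$, and the missing $h=1$ estimate (and the upgrade of the rest) comes from the single sub-line-bundle $\mathscr O_Z(-a_1)\subseteq\mathscr N$ treated as above --- the base case $N=2$ being the classical fact that an exceptional $\mathbb P^1$ on a smooth surface has negative self-intersection.
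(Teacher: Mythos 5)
Your reduction of the inequalities to the single statement ``$\deg S\le-\deg\mathscr N$ for every subbundle $S\subseteq\mathscr N$'' is correct and tidy, but the proof you then offer for that bound has a genuine gap at exactly the point you flag yourself. Rigidity of $Z$ (your Kuranishi space being a single reduced point) is a qualitative statement, and nothing in the proposal converts it into the quantitative inequality: the claim that the quadratic obstruction class must be ``non-trivial in each graded piece,'' and that its non-vanishing in $H^1(Z,\mathscr N\otimes\mathscr O_Z(-2b))$ ``should give $b\le-\deg\mathscr N$,'' is asserted rather than derived, and on $\mathbb P^1$ the non-vanishing of that group only says $a_i+2b\ge 2$ for \emph{some} $i$ --- an inequality of entirely the wrong shape to produce a bound involving all the $a_i$ simultaneously. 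Since higher obstructions may well be the ones doing the killing (as you concede), no finite-order obstruction computation is forced to be nonzero, so this route does not close. The fallback induction on $N$ has the same hole: applying the inductive hypothesis to $Z\subset D$ gives $2(a_2+\cdots+a_h)+(a_{h+1}+\cdots+a_{N-1})\ge 0$, which implies the desired inequality only when $a_1\ge 0$; in the interesting case $a_1<0$ you are thrown back on the unproven obstruction step (and the existence of a smooth contracted hypersurface $D$ with $N_{Z/D}=\oplus_{i\ge 2}\mathscr O(-a_i)$ is itself not justified).

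The one sentence of your proposal that points at a complete argument is the ``commutative-algebraic route,'' and that is essentially what the paper (following Ando) does --- but the two ingredients you elide are the whole proof. First, the factor $2$ is produced not by a twist $\mathscr O(-2b)$ but by replacing $I=I_Z$ with the intermediate ideal $J$ which is locally $(g_1,\dots,g_{N-1})^2+(g_{h+1},\dots,g_{N-1})$; the associated graded of the resulting weighted filtration is a sum of line bundles $\mathscr O_Z(m_1a_1+\cdots+m_{N-1}a_{N-1})$ indexed by $m_1+\cdots+m_h+2(m_{h+1}+\cdots+m_{N-1})=r$, so that $\chi(\mathscr O_X/J^r)$ is a polynomial of degree $N$ in $r$ with leading coefficient $2(a_1+\cdots+a_h)+(a_{h+1}+\cdots+a_{N-1})$ up to a positive constant. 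Second, positivity of that leading coefficient comes from the lower bound $\chi(\mathscr O_X/J^r)\ge-h^1(\mathscr O_X/J^r)\ge-M$ with $M$ independent of $r$, which requires the theorem on formal functions applied to the \emph{higher} direct image $R^1\varphi_*\mathscr O_X$ (coherent and supported at the point $y$), not merely the $H^0$-statement $\widehat{\mathscr O}_{Y,y}=\varprojlim H^0(\mathscr O_X/I^{k})$ that you invoke. Without the weighted ideal and without the uniform bound on $h^1$, the inequality is not reached; with them, neither rigidity nor any obstruction calculus is needed.
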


From another perspective, when $X$ is a smooth projective variety such that the canonical bundle $K_X$ is not numerically effective. According to the Mori's theory, there exists a contraction morphism $\varphi: X\rightarrow Y$ such that the fibers of $\varphi$ are connected. The major goal of Mori's theory is to construct a minimal model for each nonuniruled birational equivalence class of varieties. Studying the structure of small contraction maps, as well as finding the associated ``surgery operation" (flip), is of great importance for the minimal model program. To this end, it is essential to analyze the exceptional loci and their normal sheaves (\cite{Mo}, \cite{Ka}). When $n=4$,
Kawamata (\cite{Ka}) proved that the exceptional locus $E$ of $\varphi$ is a disjoint union of its irreducible components $E_i$ such that $E_i\cong \mathbb{P}^2$. In general, when $n=2k$, Zhang (\cite{Zh}) proved that if each irreducible component $E_i$ of the exceptional locus $E$ of $\varphi$ is a smooth subvariety of dimension $k$, then $E_i\cong \mathbb{P}^k$. He also gave the result when $n=5$ and $p=3$. Later, Su-Zhao studied small contractions of odd dimensional smooth complex projective varieties. More precisely,  they showed that when $n=2k-1$, if each irreducible component $E_i$ of the exceptional locus $E$ of $\varphi$ is a smooth subvariety of dimension $k$, then $E_i$ is isomorphic to $\mathbb{P}^k$, quadratic hypersurface $Q^k\subseteq\mathbb{P}^{k+1}$, or a linear $\mathbb{P}^{k-1}$-bundle over a smooth curve. Moreover, if dim $\varphi(E)=0$, the third case never happens. Since dim$E\ge \frac{1}{2}$ dim$X$ by \cite{Wi}, we know that the structure of $E$ is very simple when the dimension of $E$ is the minimum for a small contraction $\varphi: X\rightarrow Y$. Therefore studying the projective space as the exceptional set is a basic and important research direction, especially for the normal bundle of the exceptional set. Kachi (\cite{Kac}) study the normal bundle of $\mathbb{P}^2$ as the exceptional set of some special flip contraction. However, as far as we know, such kind of result is very few for higher dimensional exceptional set . The simple reason is that the vector bundles on $\mathbb{P}^k$ are not known very clearly. So, even if the normal bundle of the exceptional set splits, the explicit split type is still unknown.

The purpose of this paper is to study the normal bundle of the exceptional set of isolated simple small singularities in the higher dimension case when the Picard group of the exceptional set is $\mathbb{Z}$ and the normal bundle of it has some good filtration. The typical example is that the exceptional set is projective space with splitting normal bundle. So we generalized Nakayama (\cite{Na}) and Ando's (\cite{An2}) results to higher dimension.

%
	
\begin{definition}
Let $E$ be a holomorphic vector bundle on a variety $Z$ of rank $r$ and Pic$Z\cong\mathbb{Z}$. If there exists a filtration $$E=\mathscr{F}_0\supset \mathscr{F}_1 \supset \cdots\supset \mathscr{F}_{r}=0$$ with all $\mathscr{F}_{i-1}/\mathscr{F}_{i}$ $(1\le i\le r)$ are invertible sheaves and $\mathscr{F}_{i-1}/\mathscr{F}_{i}=\mathscr{O}_Z(a_i)$, where $a_i\in \mathbb{Z}$, we call $E$ has a good filtration.
\end{definition}

Let $X$ be a complex manifold with a compact smooth irreducible analytic submanifold $Z$ with Pic $Z$ $\cong \mathbb{Z}$. Suppose dim$X=n$, dim$Z=p$, and $n-p\ge 2$.

Our main result is as follows.

\begin{theorem}\label{mainth}
  Let $X$ be a complex manifold of dimension $n$ and $Z$ be an exceptional set of dimension $p$ $(n-p\ge 2)$ of an isolated simple small singularity such that Pic $Z\cong \mathbb{Z}$. Let $I_Z$ be the ideal sheaf of $Z\subseteq X$. If the conormal bundle $I_Z/I_Z^2$ has a good filtration, then

\[\sum\limits_{(t_1,t_2,\cdots,t_{n-p})\in T \atop}
	a_1^{t_1} a_2^{t_2} \cdots a_{n_p}^{t_{n-p}}
	\geq 0
	\]
	and
	\[\sum\limits_{(t_1,t_2,\cdots,t_{n-p})\in T \atop}
	a_1'^{t_1} a_2'^{t_2} \cdots a_{n-p}'^{t_{n-p}}
	\geq 0,
	\]
	where
	\[
	T=\{(t_1,t_2,\cdots,t_{n-p})\in \mathbb{Z}^{n-p} \bigm| t_1\geq 0, t_2\geq 0, \cdots t_{n-p}\geq 0,~
	t_1+t_2+\cdots+t_{n-p}=p\},\]
	and \[
	a_{m}'=
	\left\{
	\begin{array}{cll}
	&2a_{m}, & {1\le m\le h;}\\
	&a_{m}, &  \text{otherwise,}
	\end{array}
	\right.\]
	for any $1 \le h \le {n-p-1}$.
\end{theorem}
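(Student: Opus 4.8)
The plan is to read both inequalities off the leading term, as $m\to\infty$, of the Euler characteristic of the $m$-th infinitesimal neighbourhood $Z_m=(Z,\mathscr{O}_X/I_Z^{m+1})$, using the good filtration to compute that leading term and the existence of the simple small contraction to pin down its sign. Since $Z$ is a submanifold, $I_Z^{m}/I_Z^{m+1}\cong\mathrm{Sym}^{m}(I_Z/I_Z^{2})$, and the good filtration of the conormal bundle $I_Z/I_Z^{2}$, with invertible quotients $\mathscr{O}_Z(a_1),\dots,\mathscr{O}_Z(a_{n-p})$, induces on each $\mathrm{Sym}^{m}(I_Z/I_Z^{2})$ a filtration whose successive quotients are exactly the line bundles $\mathscr{O}_Z\big(\textstyle\sum_i a_ik_i\big)$, one for each $(k_1,\dots,k_{n-p})\in\mathbb{Z}_{\ge 0}^{n-p}$ with $\sum_ik_i=m$. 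Splicing with the $I_Z$-adic filtration of $\mathscr{O}_X/I_Z^{m+1}$ and using additivity of Euler characteristics,
\[
\chi\big(Z,\mathscr{O}_X/I_Z^{m+1}\big)=\sum_{|k|\le m}\chi\big(Z,\mathscr{O}_Z(a\cdot k)\big),\qquad a\cdot k:=\textstyle\sum_i a_ik_i .
\]
As a function of $d$, $\chi(Z,\mathscr{O}_Z(dH))$ is a polynomial with leading term $\tfrac{d^{p}}{p!}\int_Z H^{p}$, $H$ the ample generator of $\mathrm{Pic}\,Z$; substituting $d=a\cdot k$ and performing the Ehrhart-type summation over $\{k\ge 0,\ |k|\le m\}$ — where the Dirichlet integral $\int_{\{u\ge 0,\,|u|\le 1\}}(a\cdot u)^{p}\,du=\tfrac{p!}{n!}\,h_p(a_1,\dots,a_{n-p})$ produces precisely the complete homogeneous symmetric polynomial — gives
\[
\chi\big(Z,\mathscr{O}_X/I_Z^{m+1}\big)=\frac{\int_Z H^{p}}{n!}\Big(\textstyle\sum_{(t_1,\dots,t_{n-p})\in T}a_1^{t_1}\cdots a_{n-p}^{t_{n-p}}\Big)\,m^{n}+o(m^{n}),
\]
the bracket being the sum in the statement (equivalently $\int_Z s_p(N_{Z/X})/\int_Z H^{p}$, the top Segre number of the normal bundle). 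As $\int_Z H^{p}>0$, the first inequality amounts to saying this Euler characteristic is not eventually very negative.

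I would then invoke the contraction $\varphi\colon X\to Y$ with $\varphi(Z)=\{y\}$. Because it is small and simple, $\varphi_*\mathscr{O}_X=\mathscr{O}_Y$ and the ideals $I_Z^{m}$ and $\mathfrak{m}_y^{m}\mathscr{O}_X$ have cofinal powers (since $V(\mathfrak{m}_y\mathscr{O}_X)=\varphi^{-1}(y)=Z$ set-theoretically); and — this is where the rationality result of the paper, the higher-dimensional analogue of Laufer's, is used — the higher direct images of $\mathscr{O}_X$ vanish, so that formal functions give $\varprojlim_m H^{i}(Z_m,\mathscr{O}_{Z_m})=0$ for $i\ge 1$. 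From this one controls the higher cohomology of the $Z_m$ sufficiently to ensure $\chi(Z_m,\mathscr{O}_{Z_m})$ stays bounded below (the $h^{0}$-term being non-negative and dominant); combined with the leading-term formula and $\int_Z H^{p}>0$, this forces $\sum_{t\in T}a_1^{t_1}\cdots a_{n-p}^{t_{n-p}}\ge 0$.

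For the primed inequality I would rerun the argument with the $I_Z$-adic filtration replaced by the finer filtration attached to the weight vector $(2,\dots,2,1,\dots,1)$ (with $h$ twos) on the conormal directions singled out by the good filtration: concretely, the Rees-type filtration $F^{\bullet}$ in which the sub-bundle $\mathscr{F}_{h}\subset I_Z/I_Z^{2}$ is assigned order $2$ and the remaining directions order $1$. Its levels are sandwiched between $I_Z^{2m}$ and $I_Z^{m}$, hence are still cofinal with the powers of $\mathfrak{m}_y\mathscr{O}_X$, so the vanishing and boundedness input of the previous paragraph applies verbatim; and its graded pieces are now the line bundles $\mathscr{O}_Z(a'\cdot k)$ with $a'_m=2a_m$ for $m\le h$ and $a'_m=a_m$ otherwise. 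The same Euler-characteristic computation then produces the leading term $\tfrac{\int_Z H^{p}}{n!}\big(\sum_{t\in T}a_1'^{t_1}\cdots a_{n-p}'^{t_{n-p}}\big)m^{n}$, whence $\sum_{t\in T}a_1'^{t_1}\cdots a_{n-p}'^{t_{n-p}}\ge 0$ for every $1\le h\le n-p-1$.

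The main obstacle I anticipate is the step in the second paragraph: deducing from the mere existence of the simple small contraction a genuine lower bound on $\chi(Z_m,\mathscr{O}_{Z_m})$, i.e. ruling out that the alternating sum of the higher cohomology groups $H^{i}(Z,\mathscr{O}_X/I_Z^{m+1})$ for $i\ge 1$ — which really can grow when the conormal bundle is not ample — swamps $h^{0}$. This is the higher-dimensional substitute for the key cohomological lemmas of Laufer and Ando, and establishing it (via rationality of the singularity together with a careful analysis of the inverse systems $\{H^{i}(Z_m,\mathscr{O}_{Z_m})\}_m$, exploiting $\mathrm{Pic}\,Z\cong\mathbb{Z}$ to keep the relevant twists one-parameter) is the technical heart; by contrast the Dirichlet/Ehrhart bookkeeping of the first paragraph is routine once one recognises that the answer is the complete homogeneous symmetric polynomial $h_p$.
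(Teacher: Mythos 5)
Your overall route is the paper's: filter $\mathscr{O}_X/I_Z^r$ so that the graded pieces are the line bundles $\mathscr{O}_Z(\sum m_ia_i)$ indexed by lattice points, identify the degree-$n$ coefficient of $\chi(\mathscr{O}_X/I_Z^r)$ with a positive multiple of the complete homogeneous symmetric polynomial $\sum_{T}a_1^{t_1}\cdots a_{n-p}^{t_{n-p}}$ (your Dirichlet-integral computation is a cleaner substitute for the paper's Lemma \ref{Comb}, which does the same bookkeeping via Euler's finite difference theorem), and for the primed inequality pass to the powers of $J=\mathscr{F}_h$, whose interleaved filtration $J^r\supset I_ZJ^r\supset J^{r+1}$ realizes exactly your weighted Rees filtration (this is Proposition \ref{grade}). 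That part of the outline is sound.

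The genuine gap is the step you yourself flag as the ``technical heart'', and the tool you propose for it does not apply. You want to control the higher cohomology of $\mathscr{O}_X/I_Z^r$ by invoking the rationality of $(Y,y)$, i.e.\ vanishing of $R^i\varphi_*\mathscr{O}_X$. But the rationality theorem in this paper is proved only for $Z\cong\mathbb{P}^p$ with split normal bundle and all $a_i\ge 0$; Theorem \ref{mainth} assumes neither, and small contractions with some $a_i<0$ need not be rational, so $\varprojlim_rH^i(\mathscr{O}_X/I_Z^r)=0$ is simply not available. What the argument actually needs --- and all the paper uses --- is much weaker: since $\varphi$ is proper with $\dim\varphi(Z)=0$, Grauert's coherence theorem makes $R^i\varphi_*\mathscr{O}_X$ ($i\ge1$) a coherent sheaf supported at the point $y$, hence finite-dimensional over $\mathbb{C}$, and the theorem on formal (holomorphic) functions identifies $\varprojlim_rH^i(\mathscr{O}_X/I_Z^r)$ with $(R^i\varphi_*\mathscr{O}_X)^{\wedge}_y$. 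This yields a bound $\dim H^i(\mathscr{O}_X/I_Z^r)\le M$ uniform in $r$ for $1\le i\le p$, hence $\chi(\mathscr{O}_X/I_Z^r)\ge -pM$, which is exactly the lower bound that forces the leading coefficient of the degree-$n$ polynomial $\chi(\mathscr{O}_X/I_Z^r)$ to be non-negative. With this substitution (and the same observation applied to $\mathscr{O}_X/J^r$) your outline closes up into the paper's proof; without it, the decisive inequality is not established.
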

	\vspace{.5cm}
\begin{remark}  There are many new restrictions of those $a_i$'s indeed if the dimension of $Z$ is odd. For example, fix the same assumption as Theorem \ref{mainth} and let dim$Z=3$, then \\
	\[
	\sum\limits_{i=1}^{n-3} a_i^3+\sum\limits_{1\le i,j\le n-3\atop i\neq j}
	a_i^2a_j + \sum\limits_{1\le i,j,k\le n-3\atop i\neq j\neq k}a_ia_ja_k \geq 0
	\]
	and
	\[
	\sum\limits_{i=1}^{n-3} a_i'^3+\sum\limits_{1\le i,j\le n-3\atop i\neq j}
	a_i'^2a_j' + \sum\limits_{1\le i,j,k\le n-3\atop i\neq j\neq k}a_i'a_j'a_k' \geq 0,
	\]
	where
    \[
	a_m'=
	\left\{
	\begin{array}{cll}
	&2a_m, & {1\le m\le h}\\
	&a_m, & {\text{otherwise}}
		\end{array}
	\right.
	\]
	for any  $1\le h \le {n-4}$.\\
\end{remark}

\begin{corollary}
Let $X$ be a complex manifold of dimension $n$ and $\mathbb{P}^p$ be the exceptional set of an isolated simple small singularity, $(n-p\ge 2)$. If the normal bundle $N_{\mathbb{P}^p/X}\cong \oplus_{i=1}^{n-p}\mathscr{O}_{\mathbb{P}^p}(-a_i)$, then we have a system of inequalities
\[\sum\limits_{(t_1,t_2,\cdots,t_{n-p})\in T \atop}
	a_1'^{t_1} a_2'^{t_2} \cdots a_{n-p}'^{t_{n-p}}
	\geq 0,	\]
	where
	\[
	T=\{(t_1,t_2,\cdots,t_{n-p})\in \mathbb{Z}^{n-p} \bigm| t_1\geq 0, t_2\geq 0, \cdots t_{n-p}\geq 0,~
	t_1+t_2+\cdots+t_{n-p}=p\},\]
	and $a_i'=a_i$ or $2a_i$.
\end{corollary}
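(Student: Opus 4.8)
The plan is to obtain the corollary as the special case $Z=\mathbb{P}^p$ of Theorem \ref{mainth}, so the only task is to check that its two hypotheses hold and then to exploit the extra symmetry available for projective space. First, $\operatorname{Pic}\mathbb{P}^p\cong\mathbb{Z}$, so the condition $\operatorname{Pic}Z\cong\mathbb{Z}$ is automatic. Second, since $Z=\mathbb{P}^p$ is a smooth submanifold of $X$, the conormal sheaf is the dual of the normal bundle, whence
\[
I_Z/I_Z^{2}\;\cong\;N_{\mathbb{P}^p/X}^{\vee}\;\cong\;\bigoplus_{i=1}^{n-p}\mathscr{O}_{\mathbb{P}^p}(a_i).
\]
A direct sum of line bundles always carries a good filtration in the sense of the Definition above: fixing a permutation $\sigma$ of $\{1,\dots,n-p\}$ and putting $\mathscr{F}_j=\bigoplus_{k=j+1}^{n-p}\mathscr{O}_{\mathbb{P}^p}(a_{\sigma(k)})$ for $0\le j\le n-p$ gives $\mathscr{F}_{j-1}/\mathscr{F}_j\cong\mathscr{O}_{\mathbb{P}^p}(a_{\sigma(j)})$. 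Hence Theorem \ref{mainth} applies to $I_Z/I_Z^2$ with this filtration.

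Next I would run Theorem \ref{mainth} over all choices of $\sigma$. For a fixed $\sigma$ and a fixed $h$ with $1\le h\le n-p-1$, the theorem produces the displayed inequality in which the entries $a_{\sigma(1)},\dots,a_{\sigma(h)}$ are doubled; because the index set $T$ is invariant under permuting coordinates, this is exactly the asserted inequality with $a_i'=2a_i$ for $i$ in the $h$-element subset $\{\sigma(1),\dots,\sigma(h)\}$ and $a_i'=a_i$ otherwise. Letting $\sigma$ vary, we get the inequality for an arbitrary subset $S\subseteq\{1,\dots,n-p\}$ of size $1\le|S|\le n-p-1$ being doubled. The subset $S=\varnothing$ is covered by the first displayed inequality of Theorem \ref{mainth}. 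Finally, every tuple in $T$ satisfies $t_1+\cdots+t_{n-p}=p$, so replacing every $a_i$ by $2a_i$ multiplies the whole sum by $2^{p}>0$; thus the case $S=\{1,\dots,n-p\}$ reduces to $S=\varnothing$. Combining these, the inequality holds for every choice $a_i'\in\{a_i,2a_i\}$, which is the corollary.

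I do not expect any real obstacle here: the analytic heart of the matter --- the blow-up/Riemann--Roch computation that yields the numerical inequalities --- is already packaged in Theorem \ref{mainth}, and the conormal bundle of a split $\mathbb{P}^p$ trivially has a good filtration. The one point to state with care is the reduction just described: the freedom to reorder the summands of a split bundle promotes ``the first $h$ entries'' in Theorem \ref{mainth} to ``an arbitrary $h$-subset'', and the homogeneity of the monomials indexed by $T$ then absorbs the remaining case, delivering all $2^{\,n-p}$ choices of $a_i'$ simultaneously.
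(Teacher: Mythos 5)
Your proposal is correct and follows essentially the same route as the paper, which deduces the corollary from Theorem \ref{mainth} by observing that a split conormal bundle admits a good filtration for every ordering of its summands, hence yields one inequality for each choice of doubled subset. In fact you are more careful than the paper's one-line justification: your explicit handling of the two edge cases (no $a_i$ doubled, via the first inequality of the theorem, and all $a_i$ doubled, via homogeneity of degree $p$) is a worthwhile addition, since the theorem as stated only covers $1\le h\le n-p-1$.
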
	

If we let $p=1$, then we can get Nakayama and Ando's result, Theorem \ref{P1}.

\begin{corollary}\label{cod2}
Fix the same assumption as Theorem \ref{mainth} and suppose dimension of $Z$ is odd of codimension $2$ in $X$, then $a_1+a_2\geq 0$ and  $2a_1+a_2\geq 0$.
\end{corollary}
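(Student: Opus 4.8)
The plan is to read off this statement from Theorem \ref{mainth} by specialising to codimension two and then using that $p$ is odd. First I would set $n-p=2$, so that
\[
T=\{(t_1,t_2)\in\mathbb{Z}^2: t_1,t_2\ge 0,\ t_1+t_2=p\},
\]
and, writing $t=t_1$, the two conclusions of Theorem \ref{mainth} become
\[
\sum_{t=0}^{p} a_1^{t}a_2^{\,p-t}\ge 0
\qquad\text{and}\qquad
\sum_{t=0}^{p} (2a_1)^{t}a_2^{\,p-t}\ge 0 ,
\]
the second one corresponding to the only admissible choice $h=1$ (the theorem allows $1\le h\le n-p-1=1$, giving $a_1'=2a_1$, $a_2'=a_2$).

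Next I would observe that $\sum_{t=0}^{p}x^{t}y^{\,p-t}$ is the complete homogeneous symmetric polynomial $h_p(x,y)$, characterised by $(x-y)\,h_p(x,y)=x^{p+1}-y^{p+1}$. Since $p$ is odd, $p+1$ is even, and the standard factorisation of a difference of even powers gives
\[
h_p(x,y)=(x+y)\sum_{j=0}^{(p-1)/2}\bigl(x^{j}y^{(p-1)/2-j}\bigr)^{2}.
\]
(The identity is an identity of polynomials, so it remains valid when $x=y$.) The sum on the right is a sum of squares, hence nonnegative, and it is strictly positive unless $x=y=0$ --- for $p\ge 3$ the terms $y^{p-1}$ and $x^{p-1}$ force this, and for $p=1$ the sum is simply $1$.

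Finally I would substitute. Taking $(x,y)=(a_1,a_2)$: if $a_1=a_2=0$ then $a_1+a_2=0\ge 0$, and otherwise the sum-of-squares factor is positive, so the first displayed inequality gives $a_1+a_2\ge 0$; taking $(x,y)=(2a_1,a_2)$ in the second displayed inequality gives $2a_1+a_2\ge 0$ in exactly the same way. I do not expect a real obstacle here: the argument is essentially bookkeeping once Theorem \ref{mainth} is in hand, and the only points deserving a word of care are keeping the factorisation at the level of polynomials (to absorb the case $a_1=a_2$) and disposing of the trivial case $a_1=a_2=0$.
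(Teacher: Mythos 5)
Your proposal is correct and follows essentially the same route as the paper: specialise Theorem \ref{mainth} to $n-p=2$ with $h=1$, then factor the odd-degree complete homogeneous symmetric polynomial as $(x+y)$ times a sum of even powers. In fact you are slightly more careful than the paper, which silently divides by the sum-of-squares factor without noting that it could vanish (only when $a_1=a_2=0$, a case you dispose of explicitly).
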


In \cite{La1}, Laufer gave a sufficient condition for the rationality of an isolated singularity when the exceptional set is $\mathbb{P}^1$. Moreover, he also calculated the Hilbert function of the singularity and gave the embedding dimension of it especially.  We generalize Laufer's results to higher dimension as follows.
\begin{theorem}
	Let $Z\cong \mathbb{P}^p$ be an exceptional set in the n-dimensional manifold $X$. Suppose  the normal bundle $N_{\mathbb{P}^p/X}\cong \oplus_{i=1}^{n-p}\mathscr{O}_{\mathbb{P}^p}(-a_i)$ and $a_i\geq 0$ for $1 \le i \le n-p$. Let $\varphi: (X, Z)\rightarrow (Y, y)$ is the contraction morphism, then  $(Y,y)$ is a rational singularity. Let $m_y$ be the maximal ideal of $Y$ at $y$. Let $h(r)=dim(m_y^r/m_y^{r+1})$ be the Hibert function for $Y$ at $y$, then
	\[
	h(r)=\sum\limits_{i_1+i_2+\cdots+i_{n-p}=r \atop i_1\geq 0,i_2\geq 0,\cdots,i_{n-p}\geq 0}{p+i_1a_1+\cdots+i_{n-p}a_{n-p} \choose p}.
	\]
	In particular, at $y$ the embedding dimension of $Y$ is $\sum\limits_{i=1}^{n-p}{p+a_i \choose a_i}$.
	\end{theorem}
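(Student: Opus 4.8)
The plan is to carry out the higher–dimensional analogue of Laufer's argument for $\mathbb{P}^1$. Write $Z=\mathbb{P}^p$ and $N=N_{Z/X}=\bigoplus_{i=1}^{n-p}\mathscr{O}_{\mathbb{P}^p}(-a_i)$, so the conormal bundle is $I_Z/I_Z^2\cong N^{\vee}=\bigoplus_{i=1}^{n-p}\mathscr{O}_{\mathbb{P}^p}(a_i)$; since $X$ is smooth along $Z$ the embedding is regular, hence $I_Z^r/I_Z^{r+1}\cong\operatorname{Sym}^r(N^{\vee})=\bigoplus_{i_1+\cdots+i_{n-p}=r}\mathscr{O}_{\mathbb{P}^p}(i_1a_1+\cdots+i_{n-p}a_{n-p})$, every twist being $\ge 0$ because every $a_i\ge 0$. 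Let $Z_r\subset X$ be the subscheme defined by $I_Z^r$; recall $\varphi_*\mathscr{O}_X=\mathscr{O}_Y$. For rationality I would first note $H^q(Z,I_Z^r/I_Z^{r+1})=0$ for all $q>0$ and all $r$ (each summand is $\mathscr{O}_{\mathbb{P}^p}(d)$ with $d\ge 0$), feed this into $0\to I_Z^r/I_Z^{r+1}\to\mathscr{O}_{Z_{r+1}}\to\mathscr{O}_{Z_r}\to0$ and induct (base case $\mathscr{O}_{Z_1}=\mathscr{O}_{\mathbb{P}^p}$) to get $H^q(Z_r,\mathscr{O}_{Z_r})=0$ for $q>0$; the theorem on formal functions then gives $R^q\varphi_*\mathscr{O}_X=0$ for $q>0$, and together with $\varphi_*\mathscr{O}_X=\mathscr{O}_Y$ (and normality of $Y$, inherited from $X$) this is exactly rationality of $(Y,y)$.

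Next I would set up the two facts powering the Hilbert–function computation. First, $m_y=\varphi_*I_Z$ (a germ at $y$ lies in $m_y$ iff its pull-back vanishes on $Z=\varphi^{-1}(y)$), and the transition maps $H^0(Z_{r+1},\mathscr{O}_{Z_{r+1}})\to H^0(Z_r,\mathscr{O}_{Z_r})$ are surjective since their cokernels embed in $H^1(Z,I_Z^r/I_Z^{r+1})=0$; hence $\widehat{\mathscr{O}}_{Y,y}=\varprojlim_r H^0(Z_r,\mathscr{O}_{Z_r})$ surjects onto each $H^0(Z_r,\mathscr{O}_{Z_r})$, and as the latter is already $m_y$-adically complete the map $\mathscr{O}_{Y,y}\to H^0(Z_r,\mathscr{O}_{Z_r})$ is itself surjective. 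From $0\to I_Z^r\to\mathscr{O}_X\to\mathscr{O}_{Z_r}\to0$ this yields $R^1\varphi_*I_Z^r=0$ for all $r$, hence $\varphi_*(I_Z^r/I_Z^{r+1})\cong H^0(Z,I_Z^r/I_Z^{r+1})$. In particular $m_y/\varphi_*I_Z^2\cong H^0(Z,N^{\vee})$, so $m_y/m_y^2\to H^0(Z,N^{\vee})$ is onto; since $N^{\vee}=\bigoplus\mathscr{O}_{\mathbb{P}^p}(a_i)$ is globally generated, the image of $m_y\mathscr{O}_X$ in $I_Z/I_Z^2$ is everything, i.e. $I_Z=m_y\mathscr{O}_X+I_Z^2$, and Nakayama applied to the coherent sheaf $I_Z/m_y\mathscr{O}_X$ (supported on $Z$) gives $m_y\mathscr{O}_X=I_Z$, whence $m_y^r\mathscr{O}_X=I_Z^r$ for every $r$.

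Granting the identity $\varphi_*I_Z^r=m_y^r$ for all $r$, the rest is formal: push forward $0\to I_Z^{r+1}\to I_Z^r\to I_Z^r/I_Z^{r+1}\to0$ using $R^1\varphi_*I_Z^{r+1}=0$ to obtain $0\to m_y^{r+1}\to m_y^r\to H^0(Z,I_Z^r/I_Z^{r+1})\to0$, so $h(r)=h^0\big(\mathbb{P}^p,\bigoplus_{i_1+\cdots+i_{n-p}=r}\mathscr{O}_{\mathbb{P}^p}(i_1a_1+\cdots+i_{n-p}a_{n-p})\big)$; since $h^0(\mathbb{P}^p,\mathscr{O}(d))=\binom{p+d}{p}$ for $d\ge 0$ (all the twists $i_1a_1+\cdots+i_{n-p}a_{n-p}$ are $\ge 0$) this is the asserted sum, and specializing to $r=1$ (only the standard basis vectors occur) gives $\dim m_y/m_y^2=\sum_{i=1}^{n-p}\binom{p+a_i}{p}=\sum_{i=1}^{n-p}\binom{p+a_i}{a_i}$, the embedding dimension.

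The one genuinely delicate point — and where I expect the real work to lie — is the identity $\varphi_*I_Z^r=m_y^r$ (equivalently $\mathscr{O}_{Y,y}/m_y^r\cong H^0(Z_r,\mathscr{O}_{Z_r})$); the inclusion $m_y^r\subseteq\varphi_*I_Z^r$ is immediate from $m_y^r\mathscr{O}_X=I_Z^r$, but the homological manipulations above only ever pin the quotient $\varphi_*I_Z^{r}/m_y^{r}$ down in terms of the next such quotient, so the circularity has to be broken by Laufer's more hands-on analysis near $Z$. The two inputs are: (i) $m_y\mathscr{O}_X=I_Z$ from the previous step, so that the pull-backs of a minimal generating set of $m_y$ generate $I_Z$; and (ii) the section ring $\bigoplus_r H^0(\mathbb{P}^p,\operatorname{Sym}^rN^{\vee})$ is generated in degree $1$, which holds because on $\mathbb{P}^p$ the map $H^0(\mathscr{O}(a))\otimes H^0(\mathscr{O}(b))\to H^0(\mathscr{O}(a+b))$ is surjective for $a,b\ge 0$. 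Concretely, given a germ $g$ with $\varphi^*g\in I_Z^r$ one writes $g\equiv\sum_j c_jm_j$ modulo a suitable power of $m_y$, with the $m_j$ monomials in the chosen generators and $c_j\in\mathbb{C}$; the extra order of vanishing of $\varphi^*g$ along $Z$ says, via (ii) and the identifications above, that $\sum_jc_jm_j$ already lies in one higher power of $m_y$, and iterating (with the lowest degrees handled by a direct computation in local coordinates) yields $\varphi_*I_Z^r=m_y^r$. This is precisely the second place the hypothesis $a_i\ge 0$ enters, through the global generation of $N^{\vee}$ and the degree-one generation of the section ring.
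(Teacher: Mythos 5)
Your proof follows essentially the same route as the paper's: vanishing of $H^j(Z,S^r(I_Z/I_Z^2))$ for $j>0$ because all $a_i\ge 0$, propagated to vanishing on (a formal neighborhood of) $Z$ --- the paper cites Grauert's Satz 4.2 where you run the induction on $\mathscr{O}_{Z_r}$ and invoke formal functions --- hence rationality, and then the identification $m_y^r/m_y^{r+1}\cong\Gamma(Z,I_Z^r/I_Z^{r+1})$ followed by the count $h^0(\mathbb{P}^p,\mathscr{O}(d))=\binom{p+d}{p}$. The only real difference is that the paper outsources the key identification $m_y^r\cong\Gamma(X,I_Z^r)$ to Laufer's work, whereas you supply the argument yourself (global generation of $N^{\vee}$ plus Nakayama to get $m_y\mathscr{O}_X=I_Z$, then degree-one generation of $\bigoplus_r H^0(S^rN^{\vee})$), and that argument is correct.
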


\section{\textbf{Normal bundles of the exceptional sets}}
	
	Let $X$ be a complex manifold of dimension $n$ and $Z$ be an exceptional set of $X$ of dimension $p$ with Pic $Z\cong \mathbb{Z}$. Let $I_Z$ be the ideal sheaf of $Z\subseteq X$ and $I_Z/I_Z^2$ be the conormal bundle of $Z$ in $X$.
	
\begin{definition}
For two ideals  $\mathscr{F}$   and  $\mathscr{J}$  with  ${I_Z} \supset \mathscr{J}\supset \mathscr{F}$  and Supp $\mathscr{O}_X/\mathscr{F}=Z$, there exists a filtration $\mathscr{J}=\mathscr{F}_0 \supset \mathscr{F}_1 \supset \cdots\supset \mathscr{F}_{r} \supset \mathscr{F}$ with every $\mathscr{F}_{i-1}/\mathscr{F}_{i}$ is a locally free $\mathscr{O}_Z-module$  and $\mathscr{F}_{r}/\mathscr{F}$ is a zero-dimensional sheaf. Then we define  $$\text{length}\mathscr{J}/\mathscr{F}=\sum_{i=1}^{r} {\text{rank} (\mathscr{F}_{i-1}/\mathscr{F}_{i})}$$ and $$ c_j(\mathscr{J}/\mathscr{F})={c_j(\bigoplus_{i=1}^{r} {\mathscr{F}_{i-1}/\mathscr{F}_{i}})}, ~(1 \le j\le p).$$ Clearly, these are independent of the choice of a filtration by the properties of Chern classes.
\end{definition}
	
	\begin{definition}
Two coherent sheaves $\mathscr{F}$  and $\mathscr{G}$ over a smooth manifold $Z$ of dimension $p$ is called to be numerical equivalent (denoted $\mathscr{F \equiv  G})$, if length $\mathscr{F}=$ length $\mathscr{G}$ and $c_i(\mathscr{F})=c_i(\mathscr{G})$ for all $i$ $(1 \le i \le p)$.
	\end{definition}

	If the conormal bundle $I_Z/I_Z^2$ has a good filtration, then we have corresponding filtration ${I_Z}=\mathscr{F}_0 \supset \mathscr{F}_1 \supset \cdots\supset \mathscr{F}_{n-p}={I_Z}^2$ with all $\mathscr{F}_{i-1}/\mathscr{F}_{i}$ are invertible sheaves and $\mathscr{F}_{i-1}/\mathscr{F}_{i}\cong \mathscr{O}_Z(a_i)$.

For any  $1\le h \le {n-p-1}$, let $J =\mathscr{F}_{h}$. Put
\begin{eqnarray*}
B(r)=\{(m_1,m_2,\cdots,m_{n-p}) \in \mathbb{Z}^{n-p} \bigm| m_i\geq 0\  (1 \le i \le n-p),\\
(m_1+m_2+\cdots+m_h)+2(m_{h+1}+\cdots+m_{n-p})=r\}.
\end{eqnarray*}

By the similar argument by Ando in \cite{An1}, Proposition 2.5,	for any nonnegative integer integer $r$, we have the following result.  We mimic Ando's proof as follows in order to keep this paper self-contained.	
	
	\begin{proposition}\label{grade}
	\[
	J^r/I_ZJ^r \equiv \bigoplus_{(m_1,m_2,\cdots,m_{n-p})\in B(2r) \atop}
	\mathscr{O}_Z(m_1a_1+ \cdots + m_{n-p}a_{n-p})
	\]
	\[
	I_ZJ^r/J^{r+1} \equiv \bigoplus_{(m_1,m_2,\cdots,m_{n-p})\in B(2r+1) \atop}
	\mathscr{O}_Z(m_1a_1+ \cdots + m_{n-p}a_{n-p})
	\]
	\[
	I_Z^r/I_Z^{r+1} \equiv \bigoplus_{m_1+\cdots+m_{n-p}=r \atop}
	\mathscr{O}_Z(m_1a_1+ \cdots + m_{n-p}a_{n-p})
	\]
\end{proposition}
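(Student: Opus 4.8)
The plan is to deduce all three numerical equivalences from the additivity of numerical equivalence on short exact sequences, together with the fact that $Z\hookrightarrow X$ is a smooth (hence regular) embedding. If $0\to\mathscr{A}\to\mathscr{B}\to\mathscr{C}\to 0$ is exact, then $\operatorname{length}\mathscr{B}=\operatorname{length}\mathscr{A}+\operatorname{length}\mathscr{C}$ and, by Whitney's formula, $c(\mathscr{B})=c(\mathscr{A})c(\mathscr{C})$; hence any coherent $\mathscr{O}_Z$-module admitting a finite filtration with invertible graded pieces $\mathscr{O}_Z(b_1),\dots,\mathscr{O}_Z(b_k)$ is numerically equivalent to $\bigoplus_j\mathscr{O}_Z(b_j)$ (we fix once and for all a generator of $\operatorname{Pic}Z\cong\mathbb{Z}$). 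Thus for each of the three quotients it suffices to exhibit such a filtration with the prescribed multidegrees. I will also use that regularity of the embedding gives $I_Z/I_Z^2$ locally free of rank $n-p$ and an isomorphism $\bigoplus_{r\ge 0}I_Z^r/I_Z^{r+1}\cong\operatorname{Sym}_{\mathscr{O}_Z}(I_Z/I_Z^2)$.

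The third formula then follows at once: $I_Z^r/I_Z^{r+1}\cong\operatorname{Sym}^r(I_Z/I_Z^2)$, and the good filtration of $I_Z/I_Z^2$ — with graded pieces $\mathscr{O}_Z(a_1),\dots,\mathscr{O}_Z(a_{n-p})$ — induces on $\operatorname{Sym}^r$ a filtration whose successive quotients are precisely the line bundles $\mathscr{O}_Z(m_1a_1+\cdots+m_{n-p}a_{n-p})$ over all $m_i\ge 0$ with $m_1+\cdots+m_{n-p}=r$, each occurring once.

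For the first two formulas I would start from the observation that $I_Z^2=\mathscr{F}_{n-p}\subseteq\mathscr{F}_h=J$, so $J^{r+1}\subseteq I_ZJ^r\subseteq J^r$ and $I_Z^2J^r\subseteq J^{r+1}$, which makes the chain
\[
\mathscr{O}_X\supseteq I_Z\supseteq J\supseteq I_ZJ\supseteq J^2\supseteq I_ZJ^2\supseteq\cdots
\]
a multiplicative filtration $G_\bullet$ with $G_{2r}=J^r$ and $G_{2r+1}=I_ZJ^r$ (multiplicativity uses $I_Z^2\subseteq J$). The two assertions amount to $G_s/G_{s+1}\equiv\bigoplus_{m\in B(s)}\mathscr{O}_Z(m_1a_1+\cdots+m_{n-p}a_{n-p})$ for all $s\ge 0$. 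To establish this I would work near a point of $Z$, choosing coordinates with $I_Z=(x_1,\dots,x_{n-p})$ in which the good filtration is realized by $\mathscr{F}_j=(x_{j+1},\dots,x_{n-p})+I_Z^2$, so that $J=(x_{h+1},\dots,x_{n-p})+I_Z^2$. Assigning weight $1$ to $x_1,\dots,x_h$, weight $2$ to $x_{h+1},\dots,x_{n-p}$, and to a monomial $x^m$ the weight $w(m)=m_1+\cdots+m_h+2(m_{h+1}+\cdots+m_{n-p})$, one checks directly that $J^r$ (resp. $I_ZJ^r$) is the ideal generated by all monomials of weight $\ge 2r$ (resp. $\ge 2r+1$); consequently $G_s/G_{s+1}$ is free over $\mathscr{O}_Z$ on the residues of the monomials of weight exactly $s$, i.e. with basis indexed by $B(s)$. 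To turn this into the required global filtration by line bundles, I would filter $G_s/G_{s+1}$ by the images of $G_s\cap I_Z^d$; locally the successive quotients of this filtration are the $\operatorname{Sym}^a(\mathscr{F}_0/\mathscr{F}_h)\otimes\operatorname{Sym}^b(\mathscr{F}_h/I_Z^2)$ with $a+2b=s$, and each of these, via the good filtration of the conormal bundle, is filtered by the line bundles $\mathscr{O}_Z(m_1a_1+\cdots+m_{n-p}a_{n-p})$ with $\sum_{i\le h}m_i=a$ and $\sum_{i>h}m_i=b$; running over $a+2b=s$ reproduces exactly the index set $B(s)$. Additivity of numerical equivalence then yields the first two formulas. (One may instead induct on $r$, the base case $r=0$ being $\mathscr{O}_X/I_Z=\mathscr{O}_Z$ and $I_Z/J=\mathscr{F}_0/\mathscr{F}_h\equiv\bigoplus_{i=1}^h\mathscr{O}_Z(a_i)$, with the step comparing $J^r/I_ZJ^r$ and $I_ZJ^r/J^{r+1}$ to the appropriate symmetric powers.)

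The reduction and the $I_Z$-adic (third) formula are routine; the part needing care is the first two. The conormal bundle is only filtered — it need not split — and near $Z$ the manifold $X$ is only locally, not globally, a product, so the delicate step is to check that the local weighted-degree normal forms for $J^r$ and $I_ZJ^r$ are genuinely compatible with the global $I_Z$-adic filtration and the global good filtration, so that the graded pieces of $J^r/I_ZJ^r$ and $I_ZJ^r/J^{r+1}$ really are filtered by the line bundles $\mathscr{O}_Z(\sum m_ia_i)$ with multiplicities counted by $B(\cdot)$. This bookkeeping is exactly the technical content of Ando's Proposition 2.5 in \cite{An1}, whose argument we follow.
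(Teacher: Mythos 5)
Your proposal is correct and follows essentially the same route as the paper (which itself mimics Ando's Proposition 2.5): a local monomial computation with weights $1$ on $g_1,\dots,g_h$ and $2$ on $g_{h+1},\dots,g_{n-p}$, a global filtration of $J^r/I_ZJ^r$ and $I_ZJ^r/J^{r+1}$ whose graded pieces are the $S^{a}(I_Z/J)\otimes S^{b}(J/I_Z^2)$ with $a+2b$ equal to $2r$ resp.\ $2r+1$ (your ``images of $G_s\cap I_Z^d$'' is exactly the paper's $H_k=I_Z^{2k-1}J^{r-k+1}\cap J^{r+1}$ construction), and additivity of length and Chern classes to conclude. The only difference is expository: you flag the intersection/normal-form bookkeeping and defer it to Ando, whereas the paper writes out that verification explicitly.
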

\begin{proof}
On a small neighborhood $U$ of $p\in Z$, we can find a set of functions $g_1, \cdots , g_{n-p}$ on $U$ corresponding to the above filtration with $g_{i}\in \Gamma(U,\mathscr{F}_{i-1})$ and the class of $g_{i}$ modulo $\mathscr{F}_{i}$ is a local base of the invertible sheaf $\mathscr{F}_{i-1}/\mathscr{F}_{i}$ $(1 \le i \le n-p)$. Clearly
\begin{eqnarray*}
I_Z \bigm|U=(g_1, \cdots , g_{n-p}),
\end{eqnarray*}

\begin{eqnarray*}
J \bigm|U=(g_1, \cdots , g_{n-p})^2+(g_{h+1}, \cdots , g_{n-p}).
\end{eqnarray*}

Next we only prove the second assertion. Put $G_{k}=I_Z^{2k-2}J^{r-k+2}+I_Z^{2k+1}J^{r-k}$, and $H_{k}=I_Z^{2k-1}J^{r-k+1}\cap J^{r+1}$. We formally put $J^0=I_Z^0=I_Z^{-1}=I_Z^{-2}=\mathscr{O}_X$. First we prove $H_{k} \supset G_{k}$. It is enough to show this on $U$. Let $x^{\alpha}$ be a monomial in $g_1, \cdots , g_{h}$ of degree $\alpha$, $y^{\beta}$ be a monomial in $g_{h+1}, \cdots , g_{n-p}$ of degree $\beta$. Since $G_{k}$ and $H_{k}$ are generated by the monomials in the form $x^{\alpha}y^{\beta}$, it is enough to show that if $x^{\alpha}y^{\beta}\in H_{k}$, then $x^{\alpha}y^{\beta}\in G_{k}$. Note that $x^{\alpha}y^{\beta}\in I_Z^{i}J^{j}$ if and only if $\alpha+\beta\geq i+j$ and $\alpha+2\beta\geq i+2j$. Assume $x^{\alpha}y^{\beta}\in H_{k}$ but $x^{\alpha}y^{\beta}\notin G_{k}$, then $x^{\alpha}y^{\beta}\in I_Z^{2k-1}J^{r-k+1}$, but $x^{\alpha}y^{\beta}\notin I_Z^{2k+1}J^{r-k}$, so we have $\alpha+\beta=r+k$ and $\alpha+2\beta\geq 2r+1$. Since $x^{\alpha}y^{\beta}\notin I_Z^{2k-2}J^{r-k+2}$, we have $\alpha+2\beta=2r+1$. But $x^{\alpha}y^{\beta} \in J^{r+1}$ leads to $\alpha+2\beta \geq 2r+2$, which is impossible. Thus we have $H_{k} \supset G_{k}$.

Since $I_Z^{2k-2}J^{r-k+2} \subset H_{k}$, we have $H_{k}+I_Z^{2k+1}J^{r-k}=G_{k}$. Put $F_{k}=G_{k}/H_{k}$. By the isomorphism theorem , $F_{k} \cong I_Z^{2k+1}J^{r-k}/H_{k+1}$. Thus we have
\begin{eqnarray*}
F_{k}/F_{k+1}\cong I_Z^{2k+1}J^{r-k}/G_{k+1},~ (0 \le k \le r).
\end{eqnarray*}

On the other hand
\begin{eqnarray*}
S^{2k+1}(I_Z/J)\otimes S^{r-k}(J/I_Z^2)\cong I_Z^{2k+1}J^{r-k}/G_{k+1},~(0 \le k \le r).
\end{eqnarray*}

Thus we have
\begin{eqnarray*}
F_{k}/F_{k+1}\cong S^{2k+1}(I_Z/J)\otimes S^{r-k}(J/I_Z^2), ~(0 \le k \le r).
\end{eqnarray*}

By the definition of numerical equivalence, we have
\begin{eqnarray*}
F_0/F_{r+1} \equiv \bigoplus_{t=0}^{k} S^{2k+1}(I_Z/J)\otimes S^{r-k}(J/I_Z^2).
\end{eqnarray*}

Because $F_0 \cong I_ZJ^{r}/J^{r+1}$, $F_{r+1} \cong I_Z^{2r}J/I_Z^{2r} \cap J_{r+1}=0$. We have
\begin{eqnarray*}
I_ZJ^r/J^{r+1} &\equiv& \bigoplus_{t=0}^{k} S^{2k+1}(I_Z/J)\otimes S^{r-k}(J/I_Z^2)\\
&\equiv& \bigoplus_{(m_1,m_2,\cdots,m_{n-p})\in B(2r+1) \atop}
\mathscr{O}_Z(m_1a_1+ \cdots + m_{n-p}a_{n-p}).
\end{eqnarray*}

For the first assertion, put $G_{k}=I_Z^{2k-3}J^{r-k+2}+I_Z^{2k}J^{r-k}$, and  $H_{k}=I_Z^{2k-2}J^{r-k+1}\cap I_ZJ^{r}$. By repeating the above process, we are done. For the last assertion , put $J=I_Z$ and we are done.
\end{proof}

	The following combinatoric results will be used in the proof of the main theorem.
	\begin{theorem}  (Euler's Finite Difference Theorem, cf. \cite{Go-Qu}, Section 10, Page 45)\label{EFDT}
	\[
	\sum\limits_{t=0}^{k}(-1)^t {k \choose t} t^j=\left\{
	\begin{array}{cll}
	&0, &{ 0\le j<k;}\\
	&(-1)^k k!,  & {j=k}.
		\end{array}
	\right.
	\]
\end{theorem}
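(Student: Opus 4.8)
The plan is to read the left-hand side as a $k$-fold iterated forward difference of the monomial function $t\mapsto t^{j}$ and then exploit the elementary fact that each application of the difference operator drops the degree of a polynomial by exactly one. Let $\Delta$ denote the forward difference operator defined by $(\Delta f)(t)=f(t+1)-f(t)$, acting on polynomials (or on functions $\mathbb{Z}\to\mathbb{Z}$). A routine induction on $k$, using Pascal's relation ${k \choose t}={k-1 \choose t-1}+{k-1 \choose t}$, gives
\[
(\Delta^{k}f)(0)=\sum_{t=0}^{k}(-1)^{k-t}{k \choose t}f(t),
\]
and hence, taking $f(t)=t^{j}$ and pulling out the global sign $(-1)^{k}$,
\[
\sum_{t=0}^{k}(-1)^{t}{k \choose t}t^{j}=(-1)^{k}(\Delta^{k}f)(0).
\]

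Next I would record the degree-lowering property of $\Delta$: expanding $(t+1)^{d}$ by the binomial theorem shows that if $g$ is a polynomial of degree $d\ge 1$ with leading coefficient $c$, then $\Delta g$ has degree $d-1$ with leading coefficient $dc$, while $\Delta g\equiv 0$ when $g$ is constant. Iterating, $\Delta^{k}(t^{j})$ is the zero polynomial whenever $k>j$, it is the constant $j!$ when $k=j$, and it is a polynomial of degree $j-k$ when $0\le j<k$ is replaced by $k<j$. Specializing to the two cases of the statement: for $0\le j<k$ we get $(\Delta^{k}f)(0)=0$, so the sum vanishes; for $j=k$ we get $(\Delta^{k}f)(0)=k!$, so the sum equals $(-1)^{k}k!$. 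This proves both cases.

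There is essentially no obstacle of substance here --- the whole content is that $\Delta$ strictly decreases the degree of a nonconstant polynomial --- so the only point requiring care is the sign bookkeeping in the operator identity and in the reduction to $(-1)^{k}(\Delta^{k}f)(0)$. For completeness I would also note a one-line generating-function alternative: multiplying the asserted identity by $x^{j}/j!$ and summing over $j\ge 0$ turns it into $\sum_{t=0}^{k}(-1)^{t}{k \choose t}e^{tx}=(1-e^{x})^{k}$, and since $(1-e^{x})^{k}=(-x)^{k}+O(x^{k+1})$, comparing the coefficient of $x^{j}$ on the two sides reproduces the stated values at once.
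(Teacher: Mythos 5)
Your argument is correct. The paper does not prove this statement at all --- it is quoted as a classical identity with a citation to Gould--Quaintance --- so there is nothing to compare against; your finite-difference proof (the identity $\sum_{t=0}^{k}(-1)^{t}\binom{k}{t}t^{j}=(-1)^{k}(\Delta^{k}f)(0)$ together with the fact that $\Delta$ lowers degree by one and multiplies the leading coefficient by the degree, so that $\Delta^{k}(t^{k})\equiv k!$ and $\Delta^{k}(t^{j})\equiv 0$ for $j<k$) is the standard one and is complete, as is your generating-function check via the expansion $(1-e^{x})^{k}=(-1)^{k}x^{k}+O(x^{k+1})$. The only phrase worth tidying is ``when $0\le j<k$ is replaced by $k<j$,'' which reads awkwardly but is mathematically harmless since that case is not needed for the statement.
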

	
\begin{theorem} (Variations of Theorem \ref{EFDT}, cf. \cite{Go-Qu}, Section 10, Page 45)\label{VT}
	\[
	\sum\limits_{t=0}^{k}(-1)^t {k \choose t} (k+i-t)^k =k!  ~ (i \geq 0).
	\]
\end{theorem}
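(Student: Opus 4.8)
The plan is to deduce this identity directly from Euler's Finite Difference Theorem (Theorem \ref{EFDT}) by expanding the $k$-th power via the binomial theorem and swapping the two summations. The case $k=0$ is trivial ($1=0!$), so assume $k\ge 1$. Write $a=k+i$ and expand
\[
(a-t)^k=\sum_{j=0}^{k}\binom{k}{j}a^{k-j}(-1)^j t^j .
\]
Substituting this into the left-hand side of the statement and interchanging the order of summation gives
\[
\sum_{t=0}^{k}(-1)^t\binom{k}{t}(k+i-t)^k
=\sum_{j=0}^{k}(-1)^j\binom{k}{j}a^{k-j}\left(\sum_{t=0}^{k}(-1)^t\binom{k}{t}t^j\right).
\]

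Next I would apply Theorem \ref{EFDT} to each inner sum $\sum_{t=0}^{k}(-1)^t\binom{k}{t}t^j$: it vanishes for every $j$ with $0\le j<k$, and it equals $(-1)^k k!$ when $j=k$. Hence in the outer sum only the $j=k$ term survives, and it equals
\[
\binom{k}{k}a^{0}(-1)^k\cdot(-1)^k k!=k!,
\]
which is exactly the claimed value; note in particular that it is independent of $i$, as asserted.

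The only points needing a word of care are the conventional reading $0^0=1$ in the $j=0$ term — equivalently, one can simply observe $\sum_{t=0}^{k}(-1)^t\binom{k}{t}=(1-1)^k=0$ directly for $k\ge 1$ — and the conceptual reason the dependence on $i$ disappears: $(k+i-t)^k$ is a polynomial of degree exactly $k$ in $t$ with leading coefficient $(-1)^k$, so its alternating binomial sum, being a $k$-th finite difference, detects only that leading coefficient. I do not expect any real obstacle here; once the binomial expansion is written down the identity is an immediate corollary of Theorem \ref{EFDT}.
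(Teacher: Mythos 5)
Your proof is correct, and it is the natural derivation the paper intends: the paper states Theorem \ref{VT} as a ``variation'' of Theorem \ref{EFDT} and simply cites Gould--Quaintance without giving an argument, so your binomial expansion of $(k+i-t)^k$ followed by an interchange of sums and an appeal to Theorem \ref{EFDT} is exactly the standard way to fill in that citation. Your side remarks on the $0^0$ convention and on the finite-difference interpretation (only the degree-$k$ leading coefficient survives, hence the independence of $i$) are both accurate and dispose of the only delicate points.
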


	\begin{lemma}\label{Comb}
For any positive integer $i,j,k$,
	
	\[\sum\limits_{m_1+\cdots+m_i=j \atop}m_1^k={i+j+k-2 \choose i+k-1}+u_1{i+j+k-3 \choose i+k-1}+\cdots+u_{k-1}{i+j-1 \choose i+k-1},\]
	where
	\[1+u_1+\cdots+u_{k-1}=k!.\]
	\end{lemma}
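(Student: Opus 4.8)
The plan is to treat $\sum_{m_1+\cdots+m_i=j} m_1^k$ as a polynomial identity in $j$ and to identify the coefficients in the stated binomial-coefficient expansion by a combination of generating functions and Euler's Finite Difference Theorem (Theorem \ref{EFDT}). First I would set up the generating function: since the number of solutions of $m_2+\cdots+m_i=j-m_1$ in nonnegative integers is $\binom{i+j-m_1-2}{i-2}$, the sum equals $\sum_{m_1=0}^{j} m_1^k \binom{i-2+j-m_1}{i-2}$, which is the coefficient of $x^j$ in $\bigl(\sum_{m\ge 0} m^k x^m\bigr)\cdot (1-x)^{-(i-1)}$. The series $\sum_{m\ge 0} m^k x^m$ is a rational function whose numerator is the Eulerian polynomial $A_k(x)$ of degree $k$ with $A_k(1)=k!$, divided by $(1-x)^{k+1}$. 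Hence the whole sum is the coefficient of $x^j$ in $A_k(x)\,(1-x)^{-(i+k)}$, and expanding $(1-x)^{-(i+k)} = \sum_{s\ge 0}\binom{i+k-1+s}{i+k-1}x^s$ and writing $A_k(x)=\sum_{t=0}^{k} c_t x^t$ with $c_0=1$ gives exactly
\[
\sum_{m_1+\cdots+m_i=j} m_1^k = \sum_{t=0}^{k} c_t \binom{i+k-1+j-t}{i+k-1},
\]
which is the claimed shape with $u_\ell = c_\ell$ and $1+u_1+\cdots+u_{k-1} = A_k(1) - c_k$. I would need to check the leading coefficient $c_k$ of $A_k(x)$ is $1$, so that the top term is really $\binom{i+j+k-2}{i+k-1}$ with coefficient $1$ and the normalization $1+u_1+\cdots+u_{k-1}=k!-1$...

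Here I should pause: the statement as written says $1+u_1+\cdots+u_{k-1}=k!$, i.e. it omits a term $u_k$; the natural reading is that the expansion runs through $\binom{i+j-1}{i+k-1}$ as the last displayed term (coefficient $u_{k-1}$) but there is in fact one further term $\binom{i+j-2}{i+k-1}$ with coefficient $u_k=c_k$, and the intended identity is $c_0+c_1+\cdots+c_k = A_k(1)=k!$. I would state the lemma with all $k+1$ terms and prove $\sum_{t=0}^{k}c_t = k!$ by evaluating $A_k(x)=(1-x)^{k+1}\sum_{m\ge 0}m^k x^m$ and taking a careful limit $x\to 1$ via L'Hôpital or, more cleanly, by the finite-difference formula $c_t = \sum_{\ell}(-1)^{\ell}\binom{k+1}{\ell}(t-\ell)^k$ combined with Theorem \ref{VT}. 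Alternatively, and perhaps more in the spirit of the paper, I would avoid Eulerian polynomials entirely: define $u_t$ implicitly by the desired expansion, then recover $u_t$ by applying the forward difference operator $\Delta$ in the variable $j$ exactly $t$ times and using Theorem \ref{EFDT} to kill the lower binomial terms; evaluating the resulting alternating sum $\sum_{s}(-1)^s\binom{\text{something}}{s}\bigl(\sum_{m_1+\cdots+m_i=\,\cdot}m_1^k\bigr)$ at a convenient value of $j$ then pins down each $u_t$, and summing these relations (or setting $j$ so that all binomials collapse) yields $\sum u_t = k!$ directly from Theorem \ref{EFDT} with $j=k$.

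For the write-up I would carry out the steps in this order: (1) reduce the sum to a single-variable sum via the composition count $\binom{i-2+j-m_1}{i-2}$; (2) observe that both sides are polynomials in $j$ of degree $i+k-1$ for $j\ge 0$, so it suffices to match them, equivalently to show the finite differences agree; (3) apply $\Delta^{i+k-1}$ and lower-order differences, invoking Theorem \ref{EFDT} to isolate each coefficient $u_t$ as an explicit alternating sum; (4) evaluate the total $\sum_t u_t$ by a single application of Euler's theorem (the $j=k$, top-degree case), getting $k!$. The main obstacle I anticipate is purely bookkeeping: keeping the index shifts in the binomial coefficients consistent (the upper index runs $i+j+k-2, i+j+k-3,\ldots$ while the lower index is fixed at $i+k-1$) and making sure the polynomial-degree argument in step (2) is valid, i.e. that $\sum_{m_1+\cdots+m_i=j}m_1^k$ genuinely agrees with a fixed polynomial in $j$ on all of $\mathbb{Z}_{\ge 0}$ and not merely for large $j$ — this is true because the composition count $\binom{i-2+j-m_1}{i-2}$ is polynomial in $(j-m_1)$ with no floor corrections, but it deserves an explicit sentence rather than being taken for granted.
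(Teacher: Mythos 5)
Your generating-function strategy is sound and in fact cleaner than the paper's proof (which rewrites the sum as $\sum_{m}m^k\binom{i+j-m-2}{i-2}$ and performs iterated summation by parts against the hockey-stick identity, using Theorem \ref{VT} to show that the $k$-fold finite differences of $m^k$ stabilize at $k!$), but your execution contains an off-by-one in the Eulerian polynomial which leads you to the false conclusion that the lemma is misstated. The numerator of $\sum_{m\ge 0}m^kx^m$ over $(1-x)^{k+1}$ is not a degree-$k$ polynomial with constant term $1$: since the $m=0$ term contributes nothing for $k\ge 1$, that numerator is $N_k(x)=x\sum_{t=0}^{k-1}E(k,t)\,x^{t}$ with $E(k,t)$ the Eulerian numbers, so it has \emph{zero} constant term, lowest nonzero coefficient $c_1=E(k,0)=1$, top coefficient $c_k=E(k,k-1)=1$, and $\sum_{t=1}^{k}c_t=N_k(1)=k!$. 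Feeding this into your own expansion gives $[x^j]\,N_k(x)(1-x)^{-(i+k)}=\sum_{t=1}^{k}c_t\binom{i+k-1+j-t}{i+k-1}$, i.e.\ exactly $k$ terms running from $c_1\binom{i+j+k-2}{i+k-1}$ down to $c_k\binom{i+j-1}{i+k-1}$, with leading coefficient $1$ and coefficient sum $k!$. That is precisely the lemma as printed, with $u_\ell=c_{\ell+1}=E(k,\ell)$. There is no missing term $u_k\binom{i+j-2}{i+k-1}$, and the normalization $1+u_1+\cdots+u_{k-1}=k!$ is correct; a sanity check with $i=k=2$ gives $\sum_{m=0}^{j}m^2=\binom{j+2}{3}+\binom{j+1}{3}$, two terms with coefficient sum $2=2!$. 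So the ``correction'' you propose would turn a correct statement into an incorrect one; once the indexing of $N_k$ is repaired, your argument closes, and the only thing still worth saying explicitly is the (easy) fact that $E(k,0)=1$, which gives the unit leading coefficient.

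For comparison: the paper's route and your second suggested route (recovering each $u_t$ by applying the forward difference operator in $j$ and invoking Theorem \ref{EFDT}) are essentially the same argument read in opposite directions, and both only extract what the main theorem actually needs, namely that the sum is a polynomial in $j$ of degree $i+k-1$ with leading coefficient $k!/(i+k-1)!$. The generating-function route buys strictly more, namely the explicit identification $u_\ell=E(k,\ell)$, which the paper never states; if you carry it out you should also add the one sentence you yourself flagged, that $\binom{i-2+j-m_1}{i-2}$ is the exact composition count for all $j\ge m_1\ge 0$, so the identity holds for every nonnegative integer $j$ and not just asymptotically.
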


\begin{proof}	
\begin{eqnarray*}
\sum\limits_{m_1+\cdots+m_i=j \atop}m_1^k&=&1^k{i+j-3 \choose i-2}+\cdots+k^k{i+j-k-2 \choose i-2}+\cdots+j^k{i-2 \choose i-2}\\
	&=&{i+j-2 \choose i-1}+\cdots+\big(k^k-(k-1)^k \big) {{i+j-k-1} \choose {i-1}} +\cdots\\
      &&+\big(j^k-(j-1)^k \big){i-1 \choose i-1}\\	
	&=&{i+j-1 \choose i} + \cdots+\big(k^k-2(k-1)^k+(k-2)^k\big)  {{i+j+k} \choose i}\\
    &&+\cdots+\big(j^k-2(j-1)^k+(j-2)^k \big){i \choose i}\\
	&=&\cdots\cdots\\	
	&=&{i+j+k-3 \choose i+k-2}+\cdots+\sum\limits_{t=0}^{k}(-1)^t {k \choose t} (k-t)^k {i+j-2 \choose i+k-2}+\\
&&\sum\limits_{t=0}^{k}(-1)^t {k \choose t} (k+1-t)^k {i+j-3 \choose i+k-2}+\cdots\\
&&+\sum\limits_{t=0}^{k}(-1)^t {k \choose t} (k+j-k-t)^k{i+k+2 \choose i+k+2}~ (\text{Theorem} \ref{VT})\\	
	&=&{i+j+k-3 \choose i+k-2}+\cdots+k!{i+j-2 \choose i+k-2}+k!{i+j-3 \choose i+k-2}\\
&&+\cdots+k!{i+k-2 \choose i+k-2}\\
	&=&{i+j+k-2 \choose i+k-1} +u_1{i+j+k-3 \choose i+k-1}+\cdots+u_{k-1}{i+j-1 \choose i+k-1},
\end{eqnarray*}
	where $1+u_1+\cdots+u_{k-1}=k!.$
\end{proof}
	$\textbf{Proof of Theorem \ref{mainth}:}$			
	\begin{proof}By Proposition \ref{grade}, we have
\begin{eqnarray*}
	ch(\mathscr{O}_X/I_Z^r)&=&{n+r-p-1 \choose n-p}+\sum\limits_{i=0}^{r-1}\sum\limits_{m_1+\cdots+m_{n-p}=i \atop}(m_1a_1+\cdots+m_{n-p}a_{n-p})+\\
	&&\frac{1}{2!}\sum\limits_{i=0}^{r-1}\sum\limits_{m_1+\cdots+m_{n-p}=i \atop}(m_1a_1+\cdots+m_{n-p}a_{n-p})^2+\cdots\\ &&+\frac{1}{p!}\sum\limits_{i=0}^{r-1}\sum\limits_{m_1+\cdots+m_{n-p}=i \atop}(m_1a_1+\cdots+m_{n-p}a_{n-p})^p.
\end{eqnarray*}	
where $	ch(\mathscr{O}_X/I_Z^r)$ is the exponential Chern character of $	\mathscr{O}_X/I_Z^r$.
By Lemma \ref{Comb},
we know that for any$1 \le j \le p$,
\begin{eqnarray*}
	\sum\limits_{i=0}^{r-1}\sum\limits_{m_1+\cdots+m_{n-p}=i \atop}(m_1a_1+\cdots+m_{n-p}a_{n-p})^j
	\end{eqnarray*}
	is a polynomial in a single indeterminate $r$ of degree $n-p+j$. Therefore 		
\begin{eqnarray*}
&\frac{1}{p!}\sum\limits_{i=0}^{r-1} \sum\limits_{m_1+\cdots+m_{n-p}=i \atop}(m_1a_1+\cdots+m_{n-p}a_{n-p})^p\\
	=&\frac{1}{p!}\sum\limits_{(t_1,t_2,\cdots,t_{n-p})\in T \atop}{p \choose t_1}\cdots {p-t_1-\cdots-t_{n-p-1} \choose t_{n-p}}\\
	&\cdot a_1^{t_1} a_2^{t_2} \cdots a_{n-p}^{t_{n-p}}
	\sum\limits_{i=0}^{r-1} \sum\limits_{m_1+\cdots+m_{n-p}=i \atop}(m_1^{t_1}\cdots m_{n-p}^{t_{n-p}})^p,
\end{eqnarray*}
	where
	\[
	T=\{(t_1,t_2,\cdots,t_{n-p})\in \mathbb{Z}^{n-p} \bigm| t_1\geq 0, t_2\geq 0, \cdots t_{n-p}\geq 0,~
	t_1+t_2+\cdots+t_{n-p}=p\}.\]
	is a polynomial in a single indeterminate $r$ of degree n.
By Lemma \ref{Comb}, for any $(t_1,t_2,\cdots,t_{n-p})\in T$, we know the leading coefficient of
	\[\sum\limits_{i=0}^{r-1} \sum\limits_{m_1+\cdots+m_{n-p}=i \atop}(m_1^{t_1}\cdots m_{n-p}^{t_{n-p}})\in \mathbb{C}[r]\]
	
	is  $t_1!t_2! \cdots t_{n-p}!$.

	Thus the leading coefficient of the above polynomial is\\
		\[
		\sum_{(t_1,t_2,\cdots,t_{n-p})\in T \atop}
		a_1^{t_1} a_2^{t_2} \cdots a_{n-p}^{t_{n-p}}.\]
		up to a positive constant.\\
	Therefore by generalized Grothendieck-Hirzebruch-Riemann-Roch theorem, on the one hand, we can express the Euler characteristic $\chi(\mathscr{O}_X/I_Z^r)$ as a polynomial in a single indeterminate $r$ , the leading coefficient of it is
	\[
	\sum_{(t_1,t_2,\cdots,t_{n-p})\in T \atop}
	a_1^{t_1} a_2^{t_2} \cdots a_{n-p}^{t_{n-p}}.\]
	up to a positive constant.

	On the other hand,\[
	\chi(\mathscr{O}_X/I_Z^r) \geq \sum_{i=1}^p (-1)^idimH^i(\mathscr{O}_X/I_Z^r).
	\]
	However, because Z is the exceptional set,
 $$\lim_{\leftarrow}H^i(\mathscr{O}_X/I_Z^r)\cong (R^i\varphi_*\mathscr{O}_X^\wedge)_y,$$ by the holomporphic functions theorem (cf. \cite{Knu}, Chapter 5, Theorem 3.1), where $\varphi: (X, Z)\rightarrow (Y, y)$ is the contraction morphism. Therefore, $$\text{dim}\lim_{\leftarrow}H^i(\mathscr{O}_X/I_Z^r)\le +\infty.$$ Thus there exists some constant $M$ which is independent of $r$ such that dim$H^i(\mathscr{O}_X/I_Z^r) \le M$ for all $i$ $(1\le i \le p)$.\\
	Thus
	\[
	\sum_{(t_1,t_2,\cdots,t_{n-p})\in T \atop}
	a_1^{t_1} a_2^{t_2} \cdots a_{n-p}^{t_{n-p}}
	\geq 0.
	\]			
	Moreover, for any $1 \le h \le {n-p-1}$, put $J =\mathscr{F}_{h}$, we can consider the sheaf $\mathscr{O}_X/J^r$, then by the similar argument as above, we only need to focus on the leading coefficient of the polynomial $\chi(\mathscr{O}_X/J^r)$. Similarly, we have

\[\sum\limits_{(t_1,t_2,\cdots,t_{n-p})\in T \atop}
	a_1'^{t_1} a_2'^{t_2} \cdots a_{n-p}'^{t_{n-p}}
	\geq 0,
	\]			
	where\[
	a_{m}'=
	\left\{
	\begin{array}{cll}
	&2a_{m}, &{1\le m\le h;}\\
	&a_{m},  &   {\text{otherwise}.}
	\end{array}
	\right.\]
\end{proof}

If the exceptional set is $\mathbb{P}^p$ and the normal $N_{\mathbb{P}^p/X}$ bundle splits, then $I_Z/I_Z^2$ can have several good filtrations. So we have many inequalities.

	\begin{corollary}
Let $X$ be a complex manifold of dimension $n$ and $\mathbb{P}^p$ be the exceptional set of an isolated simple small singularity, $(n-p\ge 2)$. If the normal bundle $N_{\mathbb{P}^p/X}\cong \oplus_{i=1}^{n-p}\mathscr{O}_{\mathbb{P}^p}(-a_i)$, then we have a system of inequalities
\[\sum\limits_{(t_1,t_2,\cdots,t_{n-p})\in T \atop}
	a_1'^{t_1} a_2'^{t_2} \cdots a_{n-p}'^{t_{n-p}}
	\geq 0,
	\]
	where
	\[
	T=\{(t_1,t_2,\cdots,t_{n-p})\in \mathbb{Z}^{n-p} \bigm| t_1\geq 0, t_2\geq 0, \cdots t_{n-p}\geq 0,~
	t_1+t_2+\cdots+t_{n-p}=p\},\]
	and $a_i'=a_i$ or $2a_i$.
\end{corollary}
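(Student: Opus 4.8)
The plan is to derive this corollary from Theorem \ref{mainth} by combining two observations. The first is a symmetry: for any tuple $(c_1,\dots,c_{n-p})$ of integers the sum
\[
\sum_{(t_1,\dots,t_{n-p})\in T} c_1^{t_1}\cdots c_{n-p}^{t_{n-p}}
\]
is the complete homogeneous symmetric polynomial $h_p(c_1,\dots,c_{n-p})$, hence unchanged under any permutation of the $c_i$. The second is that, since $N_{\mathbb{P}^p/X}$ splits, so does the conormal sheaf $I_Z/I_Z^2\cong\bigoplus_{i=1}^{n-p}\mathscr{O}_{\mathbb{P}^p}(a_i)$, and therefore for \emph{every} permutation $\sigma$ of $\{1,\dots,n-p\}$ the subsheaves $\bigoplus_{i>j}\mathscr{O}_{\mathbb{P}^p}(a_{\sigma(i)})$ ($0\le j\le n-p$) form a good filtration of $I_Z/I_Z^2$ whose successive quotients are $\mathscr{O}_{\mathbb{P}^p}(a_{\sigma(1)}),\dots,\mathscr{O}_{\mathbb{P}^p}(a_{\sigma(n-p)})$ in that order. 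Lifting to ideals via preimages under $I_Z\to I_Z/I_Z^2$, this gives an admissible filtration $I_Z\supset\cdots\supset I_Z^2$ to which Theorem \ref{mainth} applies; so Theorem \ref{mainth} may be invoked with the $a_i$ listed in any order we please.

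Granting these, I would argue as follows. Fix a choice $a_i'\in\{a_i,2a_i\}$ and set $S=\{i:a_i'=2a_i\}$, $h=|S|$. If $h=0$, the assertion is exactly the first inequality of Theorem \ref{mainth}. If $1\le h\le n-p-1$, choose a permutation $\sigma$ taking $\{1,\dots,h\}$ onto $S$, apply the second inequality of Theorem \ref{mainth} to the corresponding good filtration with this value of $h$ (which doubles precisely the $a_i$ with $i\in S$), and then use the symmetry above to rewrite the conclusion back in the original indexing as $\sum_{t\in T}\prod_i (a_i')^{t_i}\ge 0$. Finally, if $h=n-p$ then $\sum_{t\in T}\prod_i (a_i')^{t_i}=2^p\sum_{t\in T}\prod_i a_i^{t_i}\ge 0$, since every $t\in T$ satisfies $t_1+\cdots+t_{n-p}=p$, so this case reduces to the $h=0$ case. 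Together these exhaust all $2^{n-p}$ choices of $(a_1',\dots,a_{n-p}')$.

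I do not anticipate a genuine obstacle: the corollary is essentially bookkeeping on top of Theorem \ref{mainth}. The two points that deserve a word of care are the claim that a split conormal bundle admits good filtrations with its line summands in an arbitrary prescribed order — which rests on the bijection between ideal filtrations $I_Z\supset\cdots\supset I_Z^2$ and filtrations of the $\mathscr{O}_Z$-module $I_Z/I_Z^2$, so that reordering the summands of the split conormal bundle really does produce admissible good filtrations — and the endpoint $h=n-p$, which lies outside the range $1\le h\le n-p-1$ covered by Theorem \ref{mainth} and must be obtained by the elementary rescaling in the last step. (Specializing to $p=1$ recovers Corollary \ref{cod2}.)
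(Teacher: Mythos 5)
Your proof is correct and follows essentially the same route as the paper, which justifies the corollary only by the one-line remark that a split conormal bundle admits several good filtrations (one for each ordering of the line-bundle summands), each yielding an instance of Theorem \ref{mainth}. You supply the bookkeeping the paper leaves implicit — the permutation-invariance of the complete homogeneous sum and the edge case $h=n-p$, which lies outside the theorem's stated range and is correctly reduced to the $h=0$ case by factoring out $2^p$ — so nothing further is needed.
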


\begin{remark}
Set $p=1$, we can get Nakayama and Ando's result, Theorem \ref{P1}.
\end{remark}

	$\textbf{Proof of Corollary \ref{cod2}:}$
\begin{proof}
	By Theorem \ref{mainth}, we know that
	\[\sum\limits_{i=0}^{2n-1}a_1^ia_2^{2n-1-i}\geq 0\] and
	\[\sum\limits_{i=0}^{2n-1}(2a_1)^ia_2^{2n-1-i}\geq 0,\]
where dim$X=2n+1$.

	Since \[
	\sum_{i=0}^{2n-1}a_1^ia_2^{2n-1-i}=(a_1+a_2)(\sum_{i=0}^{2n-1}a_1^{2i} a_2^{2n-2-2i})
	\]	
	and \[\sum\limits_{i=0}^{2n-1}a_1^{2i} a_2^{2n-2-2i}\geq 0,\] we have
	\[a_1+a_2\geq 0.\]

	Similarly, \[2a_1+a_2\geq 0.\]\\
\end{proof}

\section{\textbf{Isolated small singularities}}
 Fixed the same notations as the formal sections. When dim$X\ge 3$, Grauert has shown that if the normal bundle of algebraic set $Z$, $N_{Z/X}$, is negative, then $Z$ is exceptional. But the reverse is not true. Even the first Chern class of the normal bundle is not negative. However, if we consider locally a Stein neighborhood of an isolated Gorenstein singularity such that the canonical divisor of $Z$ is negative, then $c_1(N_{Z/X})<0$. In fact, since a small resolution is crepant and the canonical divisor $\omega_X$ is trivial, by adjunction formula, we have
\[\omega_Z\cong \omega_X\otimes \wedge^r(N_{Z/X}).\]
So $c_1(N_{Z/X})=K_Z$ which is negative.

If the exceptional set of a simple small singularity is $\mathbb{P}^1$, Laufer studied the embedding dimension of such singularity (\cite{La1}). If the exceptional set is  $\mathbb{P}^p$, we can have similar result.
\begin{theorem}
	Let $Z\cong \mathbb{P}^p$ be an exceptional set in the n-dimensional manifold $X$. Suppose  the normal bundle $N_{\mathbb{P}^p/X}\cong \oplus_{i=1}^{n-p}\mathscr{O}_{\mathbb{P}^p}(-a_i)$ and $a_i\geq 0$ for $1 \le i \le n-p$. Let $\varphi: (X, Z)\rightarrow (Y, y)$ is the contraction morphism, then $(Y,y)$ is a rational singularity. Let $m_y$ be the maximal ideal of $Y$ at $y$. Let $h(r)=dim(m_y^r/m_y^{r+1})$ be the Hibert function for $Y$ at $y$, then
	\[
	h(r)=\sum\limits_{i_1+i_2+\cdots+i_{n-p}=r \atop i_1\geq 0,i_2\geq 0,\cdots,i_{n-p}\geq 0}{p+i_1a_1+\cdots+i_{n-p}a_{n-p} \choose p}.
	\]
	In particular,at $y$ the embedding dimension of $Y$ is $\sum\limits_{i=1}^{n-p}{p+a_i \choose a_i}$.
	\end{theorem}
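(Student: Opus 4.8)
The plan is to exploit the same Euler-characteristic machinery used in Theorem \ref{mainth}, but now to extract the full Hilbert polynomial rather than just its leading coefficient. First I would note that, since $N_{\mathbb{P}^p/X}\cong\oplus_{i=1}^{n-p}\mathscr{O}_{\mathbb{P}^p}(-a_i)$ with all $a_i\ge 0$, the conormal bundle $I_Z/I_Z^2$ is a direct sum of line bundles and hence has a good filtration in the sense of the paper (indeed every line bundle summand $\mathscr{O}(a_i)$ with $a_i\ge 0$ is globally generated). So Proposition \ref{grade}, applied with $J=I_Z$, gives
\[
I_Z^r/I_Z^{r+1}\equiv\bigoplus_{m_1+\cdots+m_{n-p}=r}\mathscr{O}_{\mathbb{P}^p}(m_1a_1+\cdots+m_{n-p}a_{n-p}).
\]
Because each exponent $m_1a_1+\cdots+m_{n-p}a_{n-p}\ge 0$, every summand $\mathscr{O}_{\mathbb{P}^p}(d)$ with $d\ge 0$ has vanishing higher cohomology, so $H^i(\mathbb{P}^p,I_Z^r/I_Z^{r+1})=0$ for $i>0$ and $h^0=\binom{p+d}{p}$ with $d$ the corresponding exponent. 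Summing the Euler-characteristic exact sequences and using $H^i(X,\mathscr{O}_X/I_Z^r)$ stabilizing, one gets that $\chi(\mathscr{O}_X/I_Z^{r+1})-\chi(\mathscr{O}_X/I_Z^r)$ equals $\sum_{i_1+\cdots+i_{n-p}=r}\binom{p+i_1a_1+\cdots+i_{n-p}a_{n-p}}{p}$.

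Next I would establish rationality. The key point is that all the graded pieces $I_Z^r/I_Z^{r+1}$ have no higher cohomology, so by descending induction on $r$ (the sheaves $\mathscr{O}_X/I_Z^r$ form a tower with these as successive quotients) one gets $H^i(\mathscr{O}_X/I_Z^r)=0$ for all $i\ge 1$ and all $r\ge 1$. By the theorem on formal functions (\cite{Knu}, Chapter 5, Theorem 3.1), $(R^i\varphi_*\mathscr{O}_X)^{\wedge}_y\cong\varprojlim_r H^i(\mathscr{O}_X/I_Z^r)=0$ for $i\ge 1$, hence $R^i\varphi_*\mathscr{O}_X=0$ for $i\ge 1$; together with $\varphi_*\mathscr{O}_X=\mathscr{O}_Y$ (normality, which follows since $X$ is smooth and $\varphi$ has connected fibres) this is exactly the definition of a rational singularity.

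Then I would identify the Hilbert function. Since the singularity is rational (in particular normal), pushing forward the filtration $\mathscr{O}_X\supset I_Z\supset I_Z^2\supset\cdots$ and using $R^1\varphi_*I_Z^r=0$ shows that $\varphi_*I_Z^r=:\mathfrak{a}_r$ is a decreasing filtration of $\mathscr{O}_{Y,y}$ by ideals with $\mathfrak{a}_r/\mathfrak{a}_{r+1}\cong H^0(\mathbb{P}^p,I_Z^r/I_Z^{r+1})$, of dimension $\sum_{i_1+\cdots+i_{n-p}=r}\binom{p+i_1a_1+\cdots+i_{n-p}a_{n-p}}{p}$. The remaining task — and this is the main obstacle — is to show that this filtration by $\mathfrak{a}_r$ coincides with the $m_y$-adic filtration, i.e. that $\mathfrak{a}_r=m_y^r$. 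One inclusion, $m_y^r\subseteq\mathfrak{a}_r$, is clear because $m_y=\mathfrak{a}_1$ and the $\mathfrak{a}_\bullet$ form a multiplicative filtration ($\mathfrak{a}_r\cdot\mathfrak{a}_s\subseteq\mathfrak{a}_{r+s}$, since $I_Z^r I_Z^s\subseteq I_Z^{r+s}$). For the reverse inclusion one argues that $\operatorname{Spec}(\bigoplus_r\mathfrak{a}_r/\mathfrak{a}_{r+1})$ — the affine cone — together with the surjectivity of $\operatorname{Sym}(m_y/m_y^2)\twoheadrightarrow\bigoplus m_y^r/m_y^{r+1}$ and a dimension/degree count forces equality; alternatively, since $\mathscr{O}_X/I_Z^r$ has length-type invariants matching those of $\mathscr{O}_{Y,y}/m_y^r$ after pushforward and the graded ring $\bigoplus\mathfrak{a}_r/\mathfrak{a}_{r+1}$ is generated in degree one (it is a quotient of the symmetric algebra on $H^0(N^\vee)^\vee$, which is generated in degree one because all $a_i\ge 0$), the two filtrations agree. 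Granting $\mathfrak{a}_r=m_y^r$, the displayed formula for $h(r)=\dim(m_y^r/m_y^{r+1})$ follows immediately, and setting $r=1$ gives $\dim m_y/m_y^2=\sum_{i=1}^{n-p}\binom{p+a_i}{p}=\sum_{i=1}^{n-p}\binom{p+a_i}{a_i}$, which is the embedding dimension of $Y$ at $y$.

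I expect the generation-in-degree-one / $\mathfrak{a}_r=m_y^r$ step to be where the real content lies; everything else is a formal consequence of Proposition \ref{grade}, the cohomology vanishing for non-negative line bundles on $\mathbb{P}^p$, and the theorem on formal functions. The hypothesis $a_i\ge 0$ is used twice in an essential way: to kill higher cohomology of the graded pieces (giving rationality and the clean Hilbert-function formula) and to ensure the associated graded ring is standard graded (giving that the $I_Z$-adic and $m_y$-adic filtrations match).
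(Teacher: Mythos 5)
Your proposal follows essentially the same route as the paper: vanishing of higher cohomology of $S^r(I_Z/I_Z^2)$ because all twists are non-negative, rationality via the theorem on formal functions (the paper instead runs the surjectivity of $H^j(X,I_Z^{r+1})\to H^j(X,I_Z^r)$ through Grauert's Satz 4.2, which amounts to the same thing), and the identification $m_y^r/m_y^{r+1}\cong\Gamma(Z,S^r(I_Z/I_Z^2))$ followed by the binomial count. The one step you rightly flag as the real content, namely $\varphi_*I_Z^r=m_y^r$, is exactly the step the paper does not argue but delegates to Laufer \cite{La2}; your generation-in-degree-one sketch (surjectivity of $H^0(\mathscr{O}(a_i))\otimes H^0(\mathscr{O}(a_j))\to H^0(\mathscr{O}(a_i+a_j))$ plus the fact that the $\mathfrak{a}_\bullet$-topology coincides with the $m_y$-adic one by formal functions) is the correct way to fill it in.
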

	
\begin{proof}:
	$I_Z$ is the defining idea of $Z$ in $\mathscr{O}_X$, we formlly put $I_Z=\mathscr{O}_X$. Consider the exact sheaf sequence
	\[
	0\rightarrow I_Z^{r+1}\rightarrow I_Z^r\rightarrow I_Z^r/I_Z^{r+1}\rightarrow 0.
	\]
	Since $a_i\geq 0$ for $1 \le i \le n-p$, $H^j(Z,I_Z^r/I_Z^{r+1})=0$ for all $r\geq 0$ and $1\le j\le p$. Hence $H^j(X,I_Z^{r+1})\rightarrow H^j(X,I_Z^r)$ is onto for all $r\geq 0$ and $1\le j\le p$. By \cite{Gra}, Satz 4.2, $H^j(X,I_Z^r)=0$ for all $r\geq 0$ and $1\le j\le p$. In particular,  $H^j(X,\mathscr{O}_X)=0$ for all $1\le j\le p$. Hence $(Y,y)$ is a rational singularity.

	Then, as in \cite{La2}, $m_y^r\cong \Gamma(Z,I_Z^r)$ and $m_y^r/m_y^{r+1}\cong \Gamma(Z,I_Z^r/I_Z^{r+1})$, hence
\begin{eqnarray*}
	\text{dim}(m_y^r/m_y^{r+1})&=&\text{dim} \Gamma(Z,I_Z^r/I_Z^{r+1})=\text{dim} \Gamma(Z,S^r(I_Z/I_Z^2))\\
	                    &=&\sum\limits_{i_1+i_2+\cdots+i_{n-p}=r \atop i_1\geq 0,i_2\geq 0,\cdots,i_{n-p}\geq 0}{p+i_1a_1+\cdots+i_{n-p}a_{n-p} \choose p}
	\end{eqnarray*}
	In particular, at $y$ the embedding dimension of $Y$ is $h(1) =\sum\limits_{i=1}^{n-p}{p+a_i \choose a_i}$.
	\end{proof}

\begin{example}
	Let $U$, $V$ and $W$ be $\mathbb{C}^4$ with coordinates $(t_1,t_2,x_1,x_2)$ , $(s_1,s_2,y_1,y_2)$ and $(w_1,w_2,z_1,z_2)$. We construct $X$ and $Z\cong \mathbb{P}^2$ by the following transition functions
	\[
	\left\{
	\begin{array}{cllllll}
	&x_1=\frac{z_2}{z_1}=\frac{1}{y_1}\\
	
	&x_2=\frac{1}{z_1}=\frac{y_2}{y_1}\\
	
%
	
	&t_1=z_1^5w_1=y_1^5s_1\\
	
	&t_2=z_1w_2=y_1s_2
	\end{array}
	\right.\]
$Z$ is contracted by
\[
\left\{
\begin{array}{cllllll}
&v_1=t_1=z_1^5w_1=y_1^5s_1\\

&v_2=t_2^2=w_2^2z_1^2=s_2^2y_1^2\\

&v_3=x_2^5t_1=w_1=s_1y_2^5\\

&v_4=x_2^2t_2^2=w_2^2=s_2^2y_2^2\\

&v_5=x_1^5t_1=w_1z_2^5=s_1\\

&v_6=x_1^2t_2^2=w_2^2z_2^2=s_2^2
\end{array}
\right.\]
It is easy to check $I_Z/I_Z^2\cong \mathscr{O}_Z(5)\oplus\mathscr{O}_Z(1)$. So $(Y,y)$ is a rational singularity with embedding dimension $24$. Here $\varphi: (X, Z)\rightarrow (Y, y)$ is the contraction morphism of $Z$ (see \cite{An1} Section 3).
	\end{example}
\section*{Acknowledgements}
Both authors would like to thank for the reviewers for pointing out some typos in the original version.


\begin{thebibliography}{Du-Fang}
\bibitem[An1]{An1} T. Ando: \emph{On the normal bundle of an exceptional curve in a higher dimensional algebraic manifold}, Math. Ann. 306 (1996), 625-645.

\bibitem[An2]{An2}T. Ando: \emph{On the normal bundle of $\mathbb{P}^1$ in the higher dimensional projective variety}, Amer. J. Math. 113(1991), 949-961.


\bibitem[Go-Qu]{Go-Qu} H. Gould and J. Quaintance: \emph{Combinatorial Identities: Table I: Intermediate Techniques for Summing Finite Series}, https://www.math.wvu.edu/~gould/Vol.4.PDF.

\bibitem[Gra]{Gra} H. Grauert: \emph{$\ddot{U}$ber Modifikationen und exzeptionelle analytische Mengen}, Math. Ann., 146(1962), 331-368.

\bibitem[Na]{Na}N. Nakayama: \emph{On smooth exceptional curves in threefolds}, J. Fac. Sci. Univ. Tokyo Sect. IA. Math. 37(1990), 511-525.

\bibitem[Ka]{Ka} Y. Kawamata:  \emph{Small contractions of four dimensional algebraic manifolds}, Math. Ann., 284 (1989), 595-600.

\bibitem[Kac]{Kac} Y. Kachi: \emph{Flips from 4-folds with isolated complete intersection singularities}, Amer. J. of Math., 120(1998), 43-102.

\bibitem[Knu]{Kun} D. Knutson: \emph{Algebraic space}, L.N.M. 203, Springer, 1971.

\bibitem[La1]{La1} H. Laufer: \emph{On $\mathbb{C}\mathbb{P}^1$ as an exceptional set}, Recent developments in several complex variables, Ann. of Math. Stud. Princeton University Press, 100 (1981), 261-275.


\bibitem[La2]{La2} A. Laufer \emph{On minimally elliptic singularities},Amer.J.Math.99(1977), 1257-1295.

\bibitem[Mo]{Mo} S. Mori: \emph{Flip theorem and the existence of minimal models for 3-folds}, J. Am. Math. Soc. 1(1988), 117-253.

\bibitem[O-T-T]{O-T-T} N. O'Brian, D. Toledo and Y. Tong: \emph{Grothendieck-Riemann-Roch for complex manifolds}, Bull. Amer. Math. Soc. 5 (1981), 182-184.

\bibitem[Wi]{Wi} A. Wisniewski: \emph{On contractions of extremal rays of Fano manifolds}, J Reine Angew Math, 417(1991), 141-157.


\bibitem[Zh]{Zh} Q. Zhang: \emph{Extremal rays on higher dimensional projective varieties}, Math. Ann., 291(1991), 497-504.



\end{thebibliography}
\end{document}